\def\Bibtex{{\rm B\kern-.05em{\sc i\kern-.025em b}\kern-0.08em T\kern-.1667em\lower.7ex\hbox{E}\kern-.125emX}}
\newtheorem{theorem}{Theorem}
\newtheorem{lemma}[theorem]{Lemma}
\newtheorem{corollary}[theorem]{Corollary}
\newtheorem{proposition}[theorem]{Proposition}
\newtheorem{definition}[theorem]{Definition}
\theoremstyle{definition}
\numberwithin{theorem}{section}
\numberwithin{equation}{section}
\DeclareMathOperator \PSH {{\rm PSH}}
\DeclareMathOperator \MA {{\rm MA}}
\DeclareMathOperator \Vol {{\rm Vol}}
\DeclareMathOperator \Capa {{\rm Cap}}
\def\e{\varepsilon}
\def\f{\varphi}
\def\dc{dd^c}
\begin{document}
	
	\title[Continuity of Monge-Amp\`ere potentials in big cohomology classes]{Continuity of Monge-Amp\`ere potentials in big cohomology classes} 
	\author{Quang-Tuan Dang}

	\address{ Laboratoire de Math\'ematiques D'Orsay, Universit\'e Paris-Saclay, CNRS, 91405 Orsay, France}
	  \email{quang-tuan.dang@universite-paris-saclay.fr}

\address{Institut de Mathematiques de Toulouse,  Universit\'e de Toulouse; CNRS,
118 route de Narbonne, 31400 Toulouse, France}
		\email{quang-tuan.dang@math.univ-toulouse.fr}

	\date{\today}
	\subjclass[2020]{32U15, 32Q15, 32W20}
	\keywords{Complex Monge-Amp\`ere equation, big cohomology class}
	\thanks{The author is partially supported by the French ANR project PARAPLUI}

	\begin{abstract} 
	 Extending Di Nezza-Lu's approach \cite{di2017complex} to the  setting 
	 of  big cohomology classes, we prove that solutions of degenerate complex Monge-Ampère
equations on compact K\"ahler manifolds are continuous on a Zariski open set.  
	This   allows us to  show that singular K\"ahler-Einstein metrics on 
	log canonical varieties of general type have continuous  potentials on the ample locus outside of the non-klt part. 
	\end{abstract}
	
	\maketitle
	\tableofcontents

	\section{Introduction}\label{sect: intro}
	
	Finding canonical metrics on complex varieties is a fundamental problem of complex geometry. 
	As evidenced by recent developments in K\"ahler geometry in connection with the Minimal Model Program, it is natural and necessary to allow the varieties $Y$ in question to be singular. Working on a desingularization $\pi: X\rightarrow Y$  is led to consider 
	degenerate complex Monge-Amp\`ere equations of the form
	\begin{equation}\label{eq: bg}
	    (\theta +dd^c \varphi)^n = e^{\lambda \varphi}fdV,
	\end{equation}
	where $\theta$ is a smooth closed real $(1,1)$-form representing a big cohomology class $\alpha$ on $X$, $\lambda \in \{0,\pm 1\}$,  and $f$ is a density of the form $f=e^{\psi_+-\psi_{-}}$, 
	the functions $\psi_+,\psi_{-}$ being quasi-plurisubharmonic on $X$. 
	The integrability properties of $f$ depend on the singularities of $Y$.
	
	 Finding a K\"ahler-Einstein metric on a stable variety $Y$ boils down to solving \eqref{eq: bg} on $X$ for $\lambda=1$ and $f\in L^{1-\delta}(X,dV)$ for all $\delta\in (0,1)$. Building on the variational approach developed in \cite{berman2013variational}, Berman and Guenancia \cite{berman2014kahler} have proved that the equation \eqref{eq: bg} admits a unique finite energy solution $\f$, in the sense of \cite{guedj2007weighted,berman2013variational}. When $\{\theta\}$ is additionally nef,  they established the smoothness of $\varphi$  on a Zariski open set. As in the classical case of Yau \cite{yau1978ricci}, the main difficulty lies in establishing an \emph{a priori $\mathcal{C}^0$-estimate}. Unfortunately the (normalized) solution $\f$ to \eqref{eq: bg} is in general unbounded, so a natural idea is to try and bound  such solution from below 
	 by a reference quasi-plurisubharmonic function. 
	
	This motivates the following general question: for which densities $ f\geq 0$ is the solution $\varphi$  locally bounded 
	  in some Zariski open subset of $X$? The main result of this paper is the following: 

\begin{theorem}
 \label{thm1}
Let $X$ be a compact K\"ahler manifold of dimension $n$ and fix a smooth closed real $(1,1)$-form $\theta$ which represents a big cohomology class. Let $\varphi\in \mathcal{E}(X,\theta)$ be the unique normalized solution  to 
\begin{equation}\label{eq: ma0}
    (\theta+dd^c \varphi)^n = fdV, \; \sup_X \varphi=0. 
\end{equation}
Assume that $f\leq e^{-\phi}$ for some quasi-plurisubharmonic function $\phi$ on $X$. Then
$\f$ is continuous on ${\rm Amp}(\theta)\setminus E_1(\phi)$, where ${\rm Amp}(\theta)$ is the ample locus of $\theta$ and $E_1(\phi)=\{x\in X:\nu(\phi,x)\geq 1 \}$, with $\nu(\phi,x)$ being the Lelong number of $\phi$ at $x$.
\end{theorem}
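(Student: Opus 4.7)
\noindent The plan is to extend the approach of Di~Nezza--Lu \cite{di2017complex} to big cohomology classes by working locally on $\mathrm{Amp}(\theta)$, where $\theta$ differs from a K\"ahler form only by a smooth potential. Fix a point $x_0\in\mathrm{Amp}(\theta)\setminus E_1(\phi)$. Since this set is open, I would choose a small coordinate ball $B\Subset\mathrm{Amp}(\theta)\setminus E_1(\phi)$ containing $x_0$ on which $\theta=dd^c h$ for a smooth local potential $h$; then $u:=\varphi+h$ is plurisubharmonic on $B$ satisfying $(dd^c u)^n=e^{-\phi}\,dV$ in the pluripotential sense. By Skoda's integrability theorem, shrinking $B$ if necessary, there exists $\delta>0$ with $e^{-\phi}\in L^{1+\delta}(B)$ (uniformly, since $\nu(\phi,\cdot)$ is upper semicontinuous and $<1$ on $\overline{B}$).

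The next step is to convert this local $L^{1+\delta}$ integrability into an $L^\infty$ bound on $\varphi$ by a regularization--stability scheme. Using Demailly's approximation I would produce smooth decreasing approximants $\phi_k\searrow\phi$ with $\nu(\phi_k,x)\nearrow\nu(\phi,x)$ and solve the normalized equation $(\theta+dd^c\varphi_k)^n=c_k e^{-\phi_k}\,dV$ with $\sup_X\varphi_k=0$ by the variational method of \cite{berman2013variational}. Because the right-hand side is bounded, $\varphi_k$ has minimal singularities and is continuous on $\mathrm{Amp}(\theta)$ by Boucksom--Eyssidieux--Guedj--Zeriahi. Standard stability estimates then yield $\varphi_k\to\varphi$ in $L^1(X)$ and in capacity on $\mathrm{Amp}(\theta)$.

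The technical core---and the main obstacle I anticipate---is a \emph{uniform} local $L^\infty$ bound on $\{\varphi_k\}$ over a ball $B'\Subset B$. The plan is to compare $\varphi_k+h$ with the solution $v_k$ of the local Dirichlet problem $(dd^c v_k)^n=c_k e^{-\phi_k}\,dV$ in $B'$, $v_k=\varphi_k+h$ on $\partial B'$: Kolodziej's local $L^\infty$ theorem controls $v_k$ once the boundary data and the $L^{1+\delta}$ norm of the density are under control (the latter being uniform in $k$ by Skoda, since Demailly regularization preserves Lelong numbers), and the comparison principle then transfers the bound to $\varphi_k+h$ on $B'$. The delicate point is to select a radius along which the boundary values $\varphi_k|_{\partial B'}$ are uniformly bounded from below; I plan to achieve this by combining the uniform energy bound $\sup_k E_\theta(\varphi_k)<\infty$ (a byproduct of the variational formulation) with a capacity/sweeping argument over radii, in the spirit of Di~Nezza--Lu, to locate such a good radius.

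Once the uniform local $L^\infty$ bound is in hand, classical local continuity estimates for the Monge-Amp\`ere operator with $L^{1+\delta}$ density (Kolodziej; Dinew--Kolodziej) give equicontinuity of $\{\varphi_k\}$ on compact subsets of $\mathrm{Amp}(\theta)\setminus E_1(\phi)$, whence the limit $\varphi$ is continuous there. The essential new difficulty compared to \cite{di2017complex} is that no global uniform bound on $\varphi_k$ is available in the big setting: when $\phi$ has Lelong numbers $\geq 1$ somewhere, $c_k e^{-\phi_k}$ is not uniformly in $L^p$ globally, so the estimates must be genuinely local and rely on the ample locus structure together with staying away from $E_1(\phi)$.
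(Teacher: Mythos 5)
There are two genuine gaps in your plan. First, your approximation scheme solves the wrong equation: you replace the right-hand side $f\,dV$ by $c_k e^{-\phi_k}dV$, where $\phi_k$ approximates the \emph{majorant} $\phi$. The hypothesis is only $f\le e^{-\phi}$, so the solutions $\varphi_k$ of $(\theta+dd^c\varphi_k)^n=c_ke^{-\phi_k}dV$ converge (at best) to the solution of $\mathrm{MA}_\theta(\psi)=c\,e^{-\phi}dV$, not to $\varphi$; "standard stability" cannot produce $\varphi_k\to\varphi$ here. (Also, smooth functions have vanishing Lelong numbers, so "smooth approximants with $\nu(\phi_k,x)\nearrow\nu(\phi,x)$" is incoherent; Demailly's equisingular approximants have analytic singularities and are smooth only off $E_{1/m}(\phi)$, which is exactly how the paper uses them -- to replace $\phi$ inside the \emph{density bound}, writing $f\le e^{-\phi_1}e^{\phi_1-\phi}$ with $e^{\phi_1-\phi}\in L^{2}$, not to change the equation being solved.)

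Second, and more structurally, the step you yourself flag as the technical core -- a uniform lower bound for $\varphi_k$ (equivalently for $\varphi$) on the boundary of a small ball, needed to feed Kolodziej's local estimate and the comparison principle -- is left to an unproven "capacity/sweeping over radii" argument based on a uniform energy bound. A bound on $E_\theta(\varphi_k)$ does not give pointwise lower bounds on spheres, and choosing a radius where some integral norm of $\varphi_k|_{\partial B'}$ is controlled is not enough for the comparison with the local Dirichlet solution. This missing lower bound is precisely where the paper does the real work, and it does it globally, not locally: setting $\psi=\rho_0+a\phi$ with $\rho_0$ a K\"ahler-current potential smooth on $\mathrm{Amp}(\theta)$, it runs the Di Nezza--Lu iteration for the generalized capacity $\mathrm{Cap}_{\theta,\psi}$ to get $\varphi\ge\psi-A$ on all of $X$. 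The obstacle in the big setting -- that $V_\theta$ and hence candidates in the capacity may have positive Lelong numbers, so one cannot invoke Skoda directly as in the K\"ahler case -- is overcome by showing that the envelope $P_\omega(b\varphi-bV_\theta)$ is $\omega$-psh with full Monge--Amp\`ere mass, hence has zero Lelong numbers, making $\int_X e^{-2P_\omega(b\varphi-bV_\theta)}g\,dV$ finite. Your proposal contains no substitute for this mechanism; until the uniform lower bound near the chosen sphere (or a global one as in the paper) is actually proved, the argument does not close. The final continuity step in the paper likewise avoids local Dirichlet problems: it builds decreasing approximants $\varphi_j=P_\theta(h_j)$, continuous on $\mathrm{Amp}(\theta)$ because their Monge--Amp\`ere measures have bounded densities, and shows the convergence is locally uniform by a second run of the same capacity iteration with $\psi_j=\lambda\psi+(1-\lambda)\varphi_j-(A+2)\lambda$.
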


 Let us recall that $E_1(\phi)$ which is called \emph{the Lelong super-level} set of $\phi$, is an analytic subset of $X$ by Siu's result \cite{siu1974analyticity}.  We refer the reader to Section~\ref{sect: big} for the definition of the \emph{ample locus} of a big cohomology class. 
 
Since $\mu=fdV$ is non-pluripolar, it is known \cite[Section~3]{boucksom2010monge} that there exists
a unique normalized solution $\f\in\mathcal{E}(X,\theta)$, so the point is  to study its regularity.  The idea of the proof is that  we first  use Demailly's equisingular approximation \cite{demailly1992regularization,demailly2015cohomology} (see Theorem~\ref{thm: dem}) to replace $\phi$ by a quasi-psh function $\phi_1$ which has analytic singularities with polar locus $Z$ contained in the set of points where the Lelong number of $\phi$ is greater than or equal to $1$.  
We then adapt the approach of Di Nezza and Lu \cite{di2017complex} (see Theorem~\ref{thm: mainthm}) to prove the continuity of $\f$ in the complement of $E_1(\phi)$ in the ample locus of $\theta$. 
We also prove a slightly stronger version of Theorem \ref{thm1} valid for more singular densities (see Theorem~\ref{mainthm}).

\smallskip

When the density $f$ is smooth in a Zariski open set (i.e. outside an analytic subset),
one expects the solution $\f$ to be smooth in a Zariski open set (see \cite[Question 21, 22]{dinew2016open}), but we are  unable to prove this for the moment. When $f$ belongs to $L^p(X)$ for some $p>1$, the H\"older continuity of $\varphi$  on the ample locus was shown in \cite{demailly2014holder}. The smoothness of $\f$ when $f$ is smooth is largely open.   
 \smallskip
 
Under  the extra assumption that   the class $\{\theta\}$ is  nef, the regularity properties for the solutions for the degenerate Monge-Amp\`ere equation \eqref{eq: ma0} have been studied by many authors (see \cite{boucksom2010monge,berman2014kahler,di2017complex} and the references therein). The strategy in these papers is that one first establishes a relative uniform estimate which allows to adapt classical ideas of Yau \cite{yau1978ricci} and Siu \cite{siu1987lectures} to obtain locally uniform estimates for the Laplacian, and one finally uses Evans-Krylov's general regularity theory to conclude.
In the above case functions in $\mathcal{E}(X,\theta)$ have zero Lelong numbers (see \cite[Corollary 1.8]{guedj2007weighted}, \cite[Theorem 1.1]{darvas2018singularity}). 
Using this property
Di Nezza and Lu \cite{di2017complex} have generalized Kołodziej’s approach \cite{kolodziej1998complex} to establish a relative uniform estimate.
Let us mention that in the general case of a big class even the "least singular" potential $V_{\theta}$ may have positive Lelong numbers.
To overcome this difficulty we exploit fine properties of quasi-plurisubharmonic  envelopes inspired by 
\cite{darvas2020metric,lu2019complex}. 

\smallskip

Our approach allows us to deal with non-nef data. As an application we prove that
 the unique singular K\"ahler-Einstein metric obtained in \cite{berman2014kahler} is continuous on some Zariski open subset.  
 More precisely, we have the following:

\begin{corollary}\label{coro}
Let $(Y,\Delta)$ be a projective log canonical pair of general type, i.e. the canonical line bundle $K_Y+\Delta$ is big.  Then there is a unique  singular K\"ahler-Einstein metric $\omega$ on $Y$ such that 
\begin{align*}
    {\rm Ric}(\omega)=-\omega+[\Delta]
\end{align*} in the weak sense of currents, and such that $\int_Y\omega^n={\rm vol}(K_Y+\Delta)$.
Furthermore $\omega$  has continuous potentials on ${\rm Amp}(K_Y+\Delta)\cap (Y,\Delta)_{\rm reg}\setminus\lfloor\Delta\rfloor$. Here, $(Y,\Delta)_{\rm reg}$ denotes the 
locus of points $p\in Y$ where the pair $(Y,\Delta)$ is log smooth at $p$, i.e.  $Y$ is smooth and $\Delta$ has simple normal crossing (snc) support on a neighborhood of $p$, and $\lfloor \Delta\rfloor$ denotes the integral part of $\Delta$, i.e. if $\Delta=\sum d_i\Delta_i$, then $\lfloor\Delta\rfloor=\sum\lfloor d_i\rfloor\Delta_i$. 
\end{corollary}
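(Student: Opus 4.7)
Existence and uniqueness of $\omega$ are due to Berman--Guenancia~\cite{berman2014kahler}; only the continuity assertion is new, and my plan is to pull the K\"ahler--Einstein equation back to a well-chosen log resolution and apply Theorem~\ref{thm1} (in fact its strengthening Theorem~\ref{mainthm}).

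First I would choose a log resolution $\pi\colon X\to Y$ of $(Y,\Delta)$ which is an isomorphism over the log smooth locus $(Y,\Delta)_{\rm reg}$, and write
\[
K_X + B = \pi^*(K_Y+\Delta), \qquad B=\sum_i b_i B_i,\quad b_i\le 1,
\]
where the upper bound encodes the log canonical hypothesis, the $B_i$ range over strict transforms of components of $\Delta$ together with $\pi$-exceptional prime divisors, and the case $b_i=1$ occurs precisely for strict transforms of components of $\lfloor\Delta\rfloor$ and for exceptional divisors of discrepancy $-1$ (which lie over the non-klt locus, hence over $\lfloor\Delta\rfloor$ on $(Y,\Delta)_{\rm reg}$). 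Fixing a smooth representative $\theta$ of the big class $\pi^*(K_Y+\Delta)$, the K\"ahler--Einstein equation pulls back to $\pi^*\omega=\theta+dd^c\varphi$ with $\varphi\in\mathcal{E}(X,\theta)$ uniformly bounded above and satisfying
\[
(\theta+dd^c\varphi)^n = e^{\varphi}\prod_i |s_i|_{h_i}^{-2b_i}\,dV,
\]
for canonical sections $s_i$ of $B_i$ with smooth Hermitian metrics $h_i$. Since $\sup_X\varphi<+\infty$ and the factors with $b_i\le 0$ are globally bounded, the right-hand side is $\le C e^{-\phi}$ with
\[
\phi := \sum_{b_i>0} b_i\log|s_i|_{h_i}^2
\]
quasi-plurisubharmonic on $X$. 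Theorem~\ref{mainthm} then yields continuity of $\varphi$ on $\mathrm{Amp}(\theta)$ minus the exceptional set prescribed by the singularities of $\phi$.

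To descend the result to $Y$, I would use that $\pi$ identifies $\mathrm{Amp}(\theta)\cap\pi^{-1}((Y,\Delta)_{\rm reg})$ with $\mathrm{Amp}(K_Y+\Delta)\cap(Y,\Delta)_{\rm reg}$, and that over $\pi^{-1}\bigl((Y,\Delta)_{\rm reg}\setminus\lfloor\Delta\rfloor\bigr)$ only strict transforms of klt components of $\Delta$ occur, so $\phi$ is locally an snc sum with every coefficient $<1$. The main subtle point -- and the reason Theorem~\ref{thm1} does not suffice directly -- is that at an intersection of several klt components the total Lelong number $\sum_{x\in B_i}b_i$ can exceed $1$, so such intersections would be removed by $E_1(\phi)$. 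The stronger Theorem~\ref{mainthm} is needed precisely because its exceptional set is governed by a multiplier-ideal/log canonical threshold-type condition which, for snc $\phi$, reduces to requiring each individual coefficient to be $<1$ at $x$, a property satisfied everywhere on $\pi^{-1}\bigl((Y,\Delta)_{\rm reg}\setminus\lfloor\Delta\rfloor\bigr)$. Transferring continuity back through the local isomorphism $\pi$ then gives the desired regularity of the local potentials of $\omega$ on $\mathrm{Amp}(K_Y+\Delta)\cap(Y,\Delta)_{\rm reg}\setminus\lfloor\Delta\rfloor$.
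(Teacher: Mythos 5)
There is a genuine gap in the way you invoke Theorem~\ref{mainthm}. You put \emph{all} positive-coefficient factors into $\phi:=\sum_{b_i>0}b_i\log|s_i|_{h_i}^2$ and then claim that the exceptional set of Theorem~\ref{mainthm} is of multiplier-ideal/log-canonical-threshold type, so that for snc $\phi$ it only removes points where some individual coefficient equals $1$. That is not what the theorem says: its exceptional set is the Lelong super-level set $E_{1/q}(\phi)$, with $q$ the conjugate exponent of the $L^p$-exponent of the remaining density, and since the leftover factors in your setup are bounded the best you can get is $E_1(\phi)$ (i.e.\ Theorem~\ref{thm1} again). For your $\phi$, the Lelong number at a point of a klt component $B_i$ is $2b_i$ (and sums over intersecting components), so every point of a klt component with $b_i\geq 1/2$, as well as many intersections of klt components, lies in $E_1(\phi)\subseteq E_{1/q}(\phi)$. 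These points belong to the locus ${\rm Amp}(K_Y+\Delta)\cap(Y,\Delta)_{\rm reg}\setminus\lfloor\Delta\rfloor$ where the corollary asserts continuity, so your argument does not prove the statement there; the ``subtle point'' you identify is real, but Theorem~\ref{mainthm} does not resolve it in the way you describe.

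The correct resolution, which is the one the paper uses, is to split the density differently: absorb the klt factors into the measure by setting $g=\prod_{b_i<1}|s_i|^{-2b_i}$, which lies in $L^p(dV)$ for some $p>1$ because the divisor is snc with coefficients $<1$, and keep only the non-klt part $\phi=\sum_{b_i=1}2\log|s_i|$. Since $e^{\varphi}\leq C$ ($\varphi$ is bounded above) and $\phi$ is smooth, hence locally bounded, on ${\rm Amp}(\theta)\setminus D_{\rm nklt}$, Theorem~\ref{thm: mainthm} applies directly and gives continuity of $\varphi$ on ${\rm Amp}(\theta)\setminus D_{\rm nklt}$ (equivalently, one can apply Theorem~\ref{mainthm} and note that $E_{1/q}(\phi)=D_{\rm nklt}$ for this $\phi$). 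Your reduction to the log resolution, the identification of the coefficient-one components, and the final descent through $\pi$ to ${\rm Amp}(K_Y+\Delta)\cap(Y,\Delta)_{\rm reg}\setminus\lfloor\Delta\rfloor$ are all fine and match the paper; only the choice of the pair $(g,\phi)$ and the corresponding appeal to Theorem~\ref{mainthm} need to be corrected as above.
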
 

 Recall that the \emph{ample locus} of a big line bundle $L$, denoted by $\textrm{Amp}(L)$ may be defined  as the ample locus of its first Chern class $\alpha=c_1(L)$ (see Definition~\ref{def: amplelocus}).  
\smallskip

 If $K_Y+\Delta$ is additionally \emph{nef}, then the potential of $\omega$  was already known to be smooth on $\textrm{Amp}(K_Y+\Delta)\cap (Y,\Delta)_{\rm reg}\setminus\lfloor\Delta\rfloor$, as follows from the combined arguments of Boucksom-Eyssidieux-Guedj-Zeriahi \cite{boucksom2010monge} (see also \cite{berman2019monge}) and Berman-Guenancia \cite{berman2014kahler}. 
\subsection*{Organization of the paper} The  paper is organized as follows. In Section \ref{sect: pre} we recall basic pluripotential
theory that will be needed later on. The proof of Theorem \ref{thm1} is given in Section \ref{sect: main}, while 
Corollary~\ref{coro} is proved in Section~\ref{sect: kemetric}.
\subsection*{Notations} In the whole article we fix
    \begin{itemize}
        \item $X$ a $n$-dimensional compact K\"ahler manifold,
        \item $dV$ a smooth volume form on $X$,
        \item $\alpha\in H^{1,1}(X,\mathbb{R})$ a big cohomology class, and $\theta$ a smooth  representative of $\alpha$
        \item  a K\"ahler form $\omega$ so that $\omega\geq \theta$.
    \end{itemize}
    
    \subsection*{Acknowledgements} I would like to express my gratitude to my advisors Vincent Guedj and Chinh H. Lu for their help and various interesting discussions.  I warmly thank the referees for useful corrections, comments, and suggestions which improve the presentation of this paper.

	\section{Preliminaries}\label{sect: pre}
   The purpose of this section is to recall some essential materials in  pluripotential theory  which will be used later.
   
\subsection{Quasi-psh functions}\label{sect: quasi-psh}
Recall that an upper semi-continuous function $ \f:X \rightarrow\mathbb{R}\cup\{-\infty\} $
is called {\it quasi-plurisubharmonic} ({\it quasi-psh} for short) if it is locally the sum of a smooth and a plurisubharmonic (psh for short) function. We say that $\f$ is {\it $\theta$-plurisubharmonic}  ({\it $\theta$-psh} for short) if it is quasi-psh, and $\theta+\dc \f\geq 0$ in the sense of currents, where $d^c$ is normalized so that $\dc=\frac{i}{\pi}\partial\Bar{\partial}$. 

By the $\dc$-lemma any closed  positive $(1,1)$-current  $T$ cohomologous to $\theta$ can be written as $T=\theta+\dc\f$ for some $\theta$-psh function $\f$ which is furthermore unique up to an additive constant. 

We let $\PSH(X,\theta)$ denote the set of all $\theta$-psh functions which are not identically $-\infty$. This set is endowed with 
the $L^1(X)$-topology. By Hartog's lemma  $\f\mapsto\sup_X\f$ is continuous in this weak topology. Since the set of  closed positive currents  in a fixed  cohomology class  is compact (in the weak topology), it follows that the set of $\f\in\PSH(X,\theta)$, with $\sup_X\f=0$ is compact. 

Quasi-psh functions are in general singular, and a convenient way to measure their singularities is the Lelong numbers. Let $x_0\in X$. Fixing a holomorphic chart $x_0\in V_{x_0}\subset X$,  the {\em Lelong number} $\nu(\f,x_0)$ of a quasi-psh function $\f$ at  $x_0\in X$ is defined as follows:
\begin{align*}
   \nu(\f,x_0):=\sup\{\gamma\geq 0: \f(z)\leq \gamma\log\|z-x_0\|+O(1), \; \text{on}\; V_{x_0}\}.  
\end{align*} We remark here that this definition does not depend on the choice of local charts.
    In particular, if $\f=\log|f|$ in a neighborhood $V_{x_0}$ of $x_0$, for some holomorphic function $f$, then $\nu(\f,x_0)$ is equal to the vanishing order $\textrm{ord}_{x_0}(f):=\sup\{ k\in\mathbb{N}:D^\gamma f(x_0)=0,\forall\, |\gamma|<k \}$.
 We can also define the \emph{Lelong super-level sets}, for $c>0$, \[E_c(\f):= \{x\in X:\nu(\f,x)\geq c \}.\]  
We also use the notation $E_c(T)$ for a closed positive $(1,1)$-current $T$.
A well-known result of Siu \cite{siu1974analyticity}  asserts that the Lelong super-level sets
$E_c(\f)$ are    analytic subsets of $X$.  We refer the reader to \cite[Remark 3.2]{demailly1992regularization} for a simple proof.

\subsection{Demailly's equisingular approximation}
We next recall   the basic result on the approximation of psh functions by
psh functions with analytic singularities. For details about this, we refer the reader to \cite{demailly1992regularization,demailly2015cohomology}. 

Following Demailly \cite{demailly1992regularization}, a closed positive $(1,1)$-current $T=\theta+\dc\f$ and its global potential $\f$ are said to have \emph{analytic singularities} if there exists $c>0$ such that 
	\begin{align*}
	\f=c\log\left[\sum_{j=1}^{N}|f_j|^2\right]+v,
	\end{align*}
locally on $X$,	where  $v$ is a smooth function and the $f_j$'s are holomorphic functions. 

Thanks to $\dc$-Lemma, the problem of approximating a positive closed $(1,1)$-current is reduced to approximating a quasi-psh function. 
The following result of Demailly \cite{demailly1992regularization,demailly2015cohomology} on the  equisingular approximation  of a quasi-psh function by quasi-psh functions with analytic singularities is crucial:
 
\begin{theorem}[Demailly's equisingular approximation]\label{thm: dem} 
 Let $\f$ be a $\theta$-psh function on $X$. There exists a decreasing sequence of quasi-psh functions $(\f_m)$ such that
\begin{enumerate}
\item $(\f_m)$ converges pointwise and in $L^1(X)$ to $\f$ as $m\to+\infty$, 
\item $\f_m$ has the same
singularities as  $1/2m$ times a logarithm of a sum of squares of holomorphic functions,
    \item $\theta+\dc\f_m\geq -\varepsilon_m\omega$, where $\varepsilon_m>0$ decreases to 0 as $m\to+\infty$,
    \item $\int_Xe^{2m(\f_m-\f)}dV<+\infty$;
    \item $\f_m$ is smooth outside the analytic subset $E_{1/m}(\f)$.
\end{enumerate}
\end{theorem}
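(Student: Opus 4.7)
The plan is to carry out Demailly's Bergman-kernel regularization, replacing $\varphi$ at scale $m$ by the logarithm of the Bergman kernel attached to a suitable weighted $L^2$ space of holomorphic sections. Fix an ample line bundle $A$ equipped with a smooth Hermitian metric $h_A$ whose Chern curvature $\omega_A$ dominates $\omega$, and for each integer $m \geq 1$ choose an integer $k_m$ large enough so that the twisted sheaf $\mathcal{O}_X(k_m A) \otimes \mathcal{I}(2m\varphi)$ is globally generated. Form the Hilbert space
\[ \mathcal{H}_m := \Bigl\{ \sigma \in H^0\bigl(X, k_m A\bigr) \,:\, \int_X |\sigma|^2_{h_A^{k_m}}\, e^{-2m\varphi}\, dV \leq 1 \Bigr\}, \]
fix an orthonormal basis $(\sigma_{j,m})_{j=1}^{N_m}$, and set
\[ \widetilde{\varphi}_m := \frac{1}{2m}\log \sum_{j=1}^{N_m} |\sigma_{j,m}|^2_{h_A^{k_m}} - (\text{smooth normalization}). \]

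Property (2) is then immediate, since in any local trivialization $\widetilde{\varphi}_m$ equals $\tfrac{1}{2m}\log\sum|f_j|^2$ plus a smooth function for holomorphic $f_j$'s. Property (4) follows from a Parseval-type computation: $\int_X e^{2m(\widetilde{\varphi}_m - \varphi)}\, dV = \sum_j \int_X |\sigma_{j,m}|^2_{h_A^{k_m}}\, e^{-2m\varphi}\, dV = N_m < +\infty$, the finiteness coming from the compactness of $X$. For (3), the induced Hermitian metric $h_A^{k_m} e^{-2m\widetilde{\varphi}_m}$ on $k_m A$ has nonnegative curvature by construction, which rephrases as $\theta + dd^c \widetilde{\varphi}_m \geq -\varepsilon_m \omega$ with $\varepsilon_m = O(k_m/m)$; balancing $k_m$ so that it grows sufficiently to ensure global generation but sublinearly in $m$ then forces $\varepsilon_m \to 0$.

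The convergence in (1) follows from a two-sided estimate. The submean value inequality yields the upper bound $\widetilde{\varphi}_m(z) \leq \sup_{B(z,r_m)} \varphi + O(1/m)$ for a well-chosen radius $r_m$. For the matching lower bound, the Ohsawa--Takegoshi--Manivel extension theorem extends the constant $1$ from any single point $z_0$ into a section $\sigma \in \mathcal{H}_m$ satisfying $|\sigma(z_0)|^2_{h_A^{k_m}} \gtrsim e^{2m\varphi(z_0)}$, hence $\widetilde{\varphi}_m(z_0) \geq \varphi(z_0) - C/m$. Together these give pointwise and $L^1$ convergence. Property (5) is then obtained from Skoda's integrability theorem: if $\nu(\varphi, x_0) < 1/m$ then $e^{-2m\varphi}$ is locally integrable near $x_0$, so Ohsawa--Takegoshi produces a section in $\mathcal{H}_m$ which is nonzero at $x_0$; the common zero locus of $(\sigma_{j,m})_j$ is thus contained in $E_{1/m}(\varphi)$, outside of which $\widetilde{\varphi}_m$ is smooth.

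The main obstacles will be twofold. The first is the quantitative positivity bookkeeping needed to force $\varepsilon_m \to 0$ while keeping $k_m A$ twisted enough for Ohsawa--Takegoshi to apply with uniformly controlled constants; this balancing act is the analytic heart of the proof. The second is upgrading the approximants $\widetilde{\varphi}_m$ to a genuinely \emph{decreasing} sequence $\varphi_m$ satisfying all five properties, since the Bergman construction only produces convergence up to $O(1/m)$ fluctuations. This is handled by a standard diagonal procedure, e.g.\ replacing $\widetilde{\varphi}_m$ by the upper semicontinuous regularization of $\sup_{k \geq m}(\widetilde{\varphi}_k - \delta_k)$ for a fast-decaying shift sequence $\delta_k \downarrow 0$, which is automatically decreasing in $m$ and retains properties (2)--(5) with only mildly degraded constants.
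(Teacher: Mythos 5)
Your overall circle of ideas (Bergman kernels, Ohsawa--Takegoshi, Skoda, subadditivity-type bookkeeping) is indeed how this theorem is proved, but note first that the paper does not prove it at all: it simply cites Demailly \cite{demailly2015cohomology} (Theorem 1.6 and Lemma 1.10 there). Measured against that proof, your specific \emph{global} implementation has a genuine gap, precisely at the point you call the ``balancing act''. In a local trivialization of $A$ with weight $\phi_A$, $dd^c\phi_A=\omega_A$, your function is $\widetilde{\varphi}_m=\frac{1}{2m}\log\sum_j|f_j|^2-\frac{k_m}{2m}\phi_A+O(1)$, so the curvature loss is indeed of order $k_m/m$. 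But to make $\mathcal{O}_X(k_mA)\otimes\mathcal{I}(2m\varphi)$ globally generated, and to run Ohsawa--Takegoshi with the weight $e^{-2m\varphi}$, you need positivity of the curvature of $h_A^{k_m}e^{-2m\varphi}$, which is only bounded below by $k_m\omega_A-2m\theta$, since $\varphi$ is merely $\theta$-psh and $\theta$ is an arbitrary smooth representative of a big (generally transcendental, non-nef) class. This forces $k_m\geq cm$ with $c$ essentially determined by $\theta\leq c\,\omega_A$, so $\varepsilon_m=O(k_m/m)$ stays bounded away from $0$ and property (3) fails; nor can you absorb the weight into a bundle ``$mL$'' because $\{\theta\}$ need not be an integral class. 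Demailly's construction avoids this by being local: one takes Bergman kernels of $L^2$ holomorphic functions on coordinate balls with weight $e^{-2m(\varphi+\text{local potential of }\theta)}$, where Ohsawa--Takegoshi on pseudoconvex domains needs no global positivity, and the $-\varepsilon_m\omega$ loss with $\varepsilon_m\downarrow0$ arises only from gluing the local kernels.

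The second defect is your final monotonization. Replacing $\widetilde{\varphi}_m$ by the usc regularization of $\sup_{k\geq m}(\widetilde{\varphi}_k-\delta_k)$ does produce a decreasing sequence, but it destroys exactly the properties that make the theorem ``equisingular'': a countable supremum of functions with analytic singularities does not have analytic singularities (so (2) fails), and smoothness outside $E_{1/m}(\varphi)$ (property (5)) is lost as well. In Demailly's argument the decreasing property is obtained structurally, from the subadditivity of multiplier ideal sheaves, $\mathcal{I}\bigl(2(m_1+m_2)\varphi\bigr)\subset\mathcal{I}(2m_1\varphi)\cdot\mathcal{I}(2m_2\varphi)$, which yields
\begin{equation*}
\varphi_{m_1+m_2}\;\leq\;\frac{m_1}{m_1+m_2}\,\varphi_{m_1}+\frac{m_2}{m_1+m_2}\,\varphi_{m_2}+\frac{C}{\min(m_1,m_2)},
\end{equation*}
and then monotonicity along the subsequence $m=2^k$ after adding suitable constants; this keeps the analytic singularity structure and properties (3)--(5) intact. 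So the route you propose would need to be reorganized along these lines (local kernels plus subadditivity) rather than global sections plus a sup-envelope.
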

\begin{proof}
We refer the reader to \cite[Theorem 1.6, Lemma 1.10]{demailly2015cohomology} for a proof. 	
\end{proof}


\subsection{Big cohomology classes}\label{sect: big}
A cohomology class $\alpha\in H^{1,1}(X,\mathbb{R})$ is  {\it big} 
if it contains a {\it K\"ahler current}, i.e. there is a positive closed current $T\in \alpha$ and $\e>0$ such that $T\geq \e \omega$. Theorem \ref{thm: dem} enables us in particular to approximate a K\"ahler current $T$ inside its cohomology class by K\"ahler currents $T_m$ with analytic singularities, with a very good control of the
singularities. A big class therefore contains plenty of K\"ahler currents with analytic singularities.  

\begin{definition}\label{def: amplelocus}
  We let $\textrm{Amp}(\alpha)$ denote the {\em ample locus} of $\alpha$, i.e. the Zariski open subset of all points $x\in X$ for which there exists a K\"ahler current $T_x\in \alpha$ with analytic singularities such that $T_x$ is smooth in a neighborhood of $x$.  
\end{definition}
  It follows from the work of Boucksom \cite[Theorem 3.17 (ii)]{boucksom2004divisorial} that one can find a single K\"ahler current $T_0\in\alpha$ with analytic singularities such that 
	$$\text{Amp}(\alpha)=X\backslash \textrm{Sing}(T_0).$$ 
In particular $T_0$ is smooth in the ample locus $\textrm{Amp}(\alpha)$.	
 
 Given $\f,\psi\in \PSH(X,\theta)$, we say that $\f$ is {\it less singular} than  $\psi$, and denote by $\psi\preceq\f$, if  there exists a constant $C$ such that $\psi\leq \f+C$ on $X$. We say that $\f,\psi$ have the {\em same singularity type}, and denote by $\f\simeq\psi$ if $\f\preceq\psi$ and $\psi\preceq \f$. 
 \begin{definition}
   A $\theta$-psh function is said to have {\em minimal singularities} if it is less singular  than  any $\theta$-psh function.  
 \end{definition} 
 Such a function is not unique in general, only its class of singularities is. 
Following Demailly, one defines the extremal function
	\begin{align*}
	V_{\theta}:=\sup\{\f\in \PSH(X,\theta): \f\leq 0\}.
	\end{align*}
	 It is a $\theta$-psh function 
	 with minimal singularities. By the analysis above  $V_\theta$ is locally bounded on the ample locus ${\rm Amp}(\alpha)$. Of course
 we have $V_\theta \equiv 0$ if $\theta$ is semi-positive. 
 
 \subsection{Non-pluripolar Monge-Amp\`ere operator}
 Let $\f_1,\cdots,\f_n\in\PSH(X,\theta)$ with minimal singularities. Then they are locally bounded on the ample locus ${\rm Amp}(\alpha)$. Following the construction of Bedford-Taylor \cite{bedford1976dirichlet,bedford1982new} in the local setting, it has been shown in \cite[Section 1.2]{boucksom2010monge} that  the product
 \begin{equation*}
     (\theta+\dc\f_1)\wedge\cdots\wedge(\theta+\dc\f_n)
 \end{equation*}
 is well-defined as a positive Radon measure on ${\rm Amp}(\alpha)$ and it has finite total mass. One can then extend it trivially on the whole $X$. 

In particular, if $\f_1=\cdots=\f_n=\f$ then this procedure defines the (non-pluripolar) Monge-Ampère measure of a function $\f\in\PSH(X,\theta)$ with minimal singularities. For a general $\f\in\PSH(X,\theta)$, its canonical  approximants $\f^j:=\max(\f,V_\theta-j)$, $j>0$ have minimal singularities. 
One can  show that the sequence of Borel positive measures ${\bf 1}_{\{\f>V_\theta-j\}}(\theta+\dc\f^j)$ is increasing in $j$. Its (strong) limit
\begin{equation*}
    \MA_\theta(\f)=(\theta+\dc\f)^n :=\lim_{j\to+\infty}\nearrow {\bf 1}_{\{\varphi>V_{\theta}-j\}}(\theta+\dc\f^j)^n
\end{equation*}
is the \emph{non-pluripolar Monge-Ampère
measure} of $\f$. 
The {\em volume} of a big class $\alpha=\{\theta\}$ is given by the total mass of the non-pluripolar Monge-Ampère
measure of $V_\theta$, i.e.  $${\rm \Vol}(\alpha):=\int_{X}\MA_\theta(V_\theta).$$
 We say that $\f\in\PSH(X,\theta)$ has {\it full Monge-Amp\`ere mass} if $\int_X\MA_\theta(\f)=\Vol(\alpha)$. We let \begin{align*}
     \mathcal{E}(X,\theta):=\left\{\f\in\PSH(X,\theta):\int_X\MA_\theta(\f)=\Vol(\alpha) \right\}
 \end{align*}
 denote the set of $\theta$-psh functions with full  Monge-Amp\`ere mass.
 Note that $\theta$-psh functions with minimal singularities have full  Monge-Amp\`ere mass (see \cite[Theorem 1.16]{boucksom2010monge} for more details), but the converse is not true. 
 
 \smallskip

We  recall here the plurifine locality  of the non-pluripolar product, which will be used several times in this paper.

\begin{lemma}
       Assume that $\f$, $\psi$ are $\theta$-psh function such that $\f=\psi$ on an open set $U$ in the plurifine topology. Then 
       \begin{equation*}
           \mathbf{1}_U\MA_\theta(\f)=\mathbf{1}_U\MA_\theta(\psi).
       \end{equation*}
\end{lemma}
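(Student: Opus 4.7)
The plan is to reduce to the classical Bedford--Taylor plurifine locality for locally bounded psh functions via the canonical truncations used in the very definition of $\MA_\theta$. Set $\f^j := \max(\f, V_\theta - j)$ and $\p^j := \max(\p, V_\theta - j)$; by construction both are $\theta$-psh with minimal singularities, hence locally bounded on the ample locus $\mathrm{Amp}(\alpha)$, on which $V_\theta$ itself is locally bounded. Introduce the auxiliary set
\[
U_j := U \cap \{\f > V_\theta - j\} \cap \{\p > V_\theta - j\},
\]
which is plurifine open: each of the last two factors is a sublevel set of a difference of quasi-psh functions, and finite intersections of plurifine open sets remain plurifine open.

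On $U_j$ the hypothesis $\f=\p$ on $U$ forces $\f^j = \f = \p = \p^j$, and both truncations are locally bounded on $\mathrm{Amp}(\alpha)$. Bedford--Taylor's local plurifine locality for bounded potentials (applied on $\mathrm{Amp}(\alpha)$ and then extended trivially) yields
\[
\mathbf{1}_{U_j}(\theta + \dc \f^j)^n = \mathbf{1}_{U_j}(\theta + \dc \p^j)^n.
\]
Moreover, because $\f=\p$ on $U$, one has the pointwise identity
\[
\mathbf{1}_U \mathbf{1}_{\{\f > V_\theta - j\}} \;=\; \mathbf{1}_U \mathbf{1}_{\{\p > V_\theta - j\}} \;=\; \mathbf{1}_{U_j}.
\]
By the very definition of the non-pluripolar product recalled above,
$\mathbf{1}_{\{\f > V_\theta - j\}}(\theta+\dc\f^j)^n \nearrow \MA_\theta(\f)$ and similarly for $\p$. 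Multiplying both monotone sequences by $\mathbf{1}_U$ and invoking monotone convergence against an arbitrary continuous test function gives
\[
\mathbf{1}_U \MA_\theta(\f) \;=\; \lim_{j\to\infty} \mathbf{1}_{U_j}(\theta+\dc\f^j)^n \;=\; \lim_{j\to\infty} \mathbf{1}_{U_j}(\theta+\dc\p^j)^n \;=\; \mathbf{1}_U \MA_\theta(\p),
\]
which is the desired equality.

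The only delicate point is the transfer of Bedford--Taylor's local plurifine locality to the truncations on the (non-compact) ample locus and the treatment of its complement. On $\mathrm{Amp}(\alpha)$ the functions $\f^j, \p^j$ are locally bounded and the classical local theory applies verbatim; on $X \setminus \mathrm{Amp}(\alpha)$ the relevant contributions are supported in a pluripolar set which, by construction, the non-pluripolar product does not charge, so trivial extension is consistent with the monotone limit. Once this bookkeeping is in place the argument is formal, and the only real input is the bounded-potential version of plurifine locality.
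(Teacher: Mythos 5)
Your argument is correct and follows essentially the same route as the paper: truncate to the canonical approximants $\max(\f,V_\theta-j)$, apply the Bedford--Taylor/BEGZ plurifine locality for locally bounded potentials on ${\rm Amp}(\alpha)$, and pass to the monotone limit defining the non-pluripolar product. The extra bookkeeping you add (the set $U_j$ and the identity $\mathbf{1}_U\mathbf{1}_{\{\f>V_\theta-j\}}=\mathbf{1}_U\mathbf{1}_{\{\psi>V_\theta-j\}}$) is exactly the step the paper leaves implicit.
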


We stress in particular that sets of the form
$\{u < v\}$, where $u$, $v$ are quasi-psh functions, are open in the plurifine topology.
\begin{proof} 
The proof for locally bounded functions can be found in \cite[Corollary 4.3]{bedford1987fine} or \cite[Section 1.2]{boucksom2010monge}. For the general case  we write $\f$ (resp. $\psi$) as the decreasing limits of its canonical approximants $\f^t:=\max(\f,V_\theta-t)$ (resp. $\psi^t:=\max(\psi,V_\theta-t)$). We observe that $\f^t$ (resp. $\psi^t$) is locally bounded on the ample locus ${\rm Amp}(\theta)$.
By the result for locally bounded functions we have
\begin{align*}
  \mathbf{1}_{U\cap (\f>V_{\theta}-t)}\MA_\theta(\f^t)= \mathbf{1}_{U_\cap (\psi>V_{\theta}-t)}\MA_\theta(\psi^t).
\end{align*} Letting $t\to+\infty$, we conclude the proof.
\end{proof}

\subsection{Capacities}
\subsubsection{The Monge-Amp\`ere capacity} 
For the convenience of the reader we recall here a few facts contained in \cite{guedj2017degenerate}.
 Let $K$ be a Borel subset of $X$. The {\it Monge-Amp\`ere capacity} is
\begin{align*}
		\Capa_{\omega}(K):=\sup\left\{\int_K(\omega+\dc u)^n:u\in \PSH(X,\omega), \, -1\leq u\leq 0 \right\}.
		\end{align*}

\begin{lemma}\label{l1} Let $\nu=gdV$ be a Radon positive measure with $0\leq g\in L^p(dV)$ for some $p>1$.
		Then there exists $B>0$ depending on $n$, $p$ $\omega$, $dV$ and $\|g\|_{L^p(dV)}$ such that, for all Borel subsets $K$ of $X$,
		\begin{align*}
		\nu(K)\leq B\cdot\Capa(K)^4.
		\end{align*}
	\end{lemma}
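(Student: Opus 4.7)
The plan is to combine Hölder's inequality with the classical exponential volume--capacity comparison. Since $g\in L^p(dV)$ with $p>1$, writing $q=p/(p-1)$, Hölder's inequality immediately gives
\[
\nu(K)=\int_K g\,dV\;\leq\;\|g\|_{L^p(dV)}\,\Vol(K)^{1/q},
\]
so everything reduces to controlling $\Vol(K)$ in terms of $\Capa_\omega(K)$.

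The second step is to establish that there exist constants $A,c>0$ depending only on $(X,\omega,dV)$ such that
\[
\Vol(K)\;\leq\; A\exp\bigl(-c/\Capa_\omega(K)^{1/n}\bigr)
\]
for every Borel $K\subset X$. The key ingredient here is the uniform Skoda integrability: there exist $\alpha,C_0>0$ such that $\int_X e^{-\alpha u}\,dV\leq C_0$ for every $u\in\PSH(X,\omega)$ with $\sup_X u=0$. Applying this to $V_K^{*}-M_K$, where $V_K^{*}:=\bigl(\sup\{u\in\PSH(X,\omega):u\leq 0 \text{ on }K\}\bigr)^{*}$ is the (regularized) extremal function of $K$ and $M_K:=\sup_X V_K^{*}$, and using Chebyshev on the set $\{V_K^{*}\leq 0\}\supset K$ outside a pluripolar set, gives $\Vol(K)\leq C_0 e^{-\alpha M_K}$. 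A standard Monge--Ampère mass comparison, obtained by testing the definition of $\Capa_\omega(K)$ against $V_K^{*}/(M_K+1)$, yields an inequality of the form $M_K+1\geq c'/\Capa_\omega(K)^{1/n}$ (at least when $\Capa_\omega(K)$ is small; otherwise the target estimate is trivial since $\Capa_\omega$ and $\Vol$ are both controlled by constants on $X$). Combining the two gives the claimed exponential decay.

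To conclude, for any $N>0$ one has the elementary bound $e^{-c/t^{1/n}}\leq C_N t^N$ uniformly for $t\in(0,\Capa_\omega(X)]$, since the ratio $t\mapsto e^{-c/t^{1/n}}/t^N$ extends continuously by $0$ at $t=0$ on a compact interval and is therefore bounded. Choosing $N=4q$ and substituting into the Hölder bound produces $\nu(K)\leq B\,\Capa_\omega(K)^{4}$ with $B$ depending only on $n$, $p$, $\omega$, $dV$ and $\|g\|_{L^p(dV)}$, as required.

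The main obstacle is the volume--capacity comparison itself, and in particular the uniform Skoda-type integrability it rests on. The latter is standard but requires upgrading Skoda's local $L^2$-integrability theorem on a finite cover of $X$ by holomorphic charts to a global uniform estimate over the normalized family $\{u\in\PSH(X,\omega):\sup_X u=0\}$, using Hartogs' lemma and the $L^1$-compactness of this family noted in Section~\ref{sect: quasi-psh}.
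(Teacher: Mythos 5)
Your proof is correct and is essentially the same argument as the one the paper relies on: the paper does not prove this lemma itself but cites Kolodziej and Eyssidieux--Guedj--Zeriahi, whose proofs are exactly this combination of H\"older's inequality with the volume--capacity comparison $\Vol(K)\leq A\exp\bigl(-c/\Capa_\omega(K)^{1/n}\bigr)$, obtained via the global extremal function, the Alexander--Taylor type mass comparison, and uniform Skoda integrability over $\{u\in\PSH(X,\omega):\sup_X u=0\}$. Only routine points are left implicit (reducing from Borel to compact $K$ by inner regularity, since the equilibrium measure of $V_K^{*}$ is supported on $\overline{K}$, and the trivial pluripolar case), so the proposal stands as written.
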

\begin{proof} This result was proved by Kolodziej; see \cite[Section 2.5]{kolodziej1998complex}. We refer the readers to
		 \cite[Proposition 3.1]{eyssidieux2009singular} for an alternative proof.
\end{proof}

	\subsubsection{The generalized capacity}
	We present here a generalization of this notion 
introduced by Di Nezza and Lu \cite{di2017complex, di2015generalized} (see also \cite[Section~ 4.1]{darvas2018monotonicity}).
	\begin{definition}
		Let $\psi\in \PSH(X,\theta)$. We define the \emph{$\psi$-relative capacity} of a Borel subset $K\subset X$ by 
		\begin{align*}
		\Capa_{\theta,\psi}(K):=\sup\left\{\int_K\MA_{\theta}(u):u\in \PSH(X,\theta), \, \psi-1\leq u\leq \psi \right\}.
		\end{align*}
	\end{definition}

Note that when $\theta$ is K\"ahler, a related notion of capacity has been studied in \cite{di2015generalized,di2017complex}. 
The (generalized) Monge-Amp\`ere capacity plays a vital role in establishing uniform estimates for complex Monge-Amp\`ere equation (see e.g. \cite{eyssidieux2009singular,boucksom2010monge,di2015generalized, di2017complex} and the references therein). We shall use the $\psi$-capacity $\Capa_{\theta,\psi}$  in the proof of Theorem~\ref{thm: mainthm}. 

\smallskip

The following results are   important for the sequel.

\begin{lemma}\label{lem: rightcontinuous}
 Fix $\f\in\mathcal{E}(X,\theta)$ and $\psi\in\PSH(X,\theta)$. Then the function \[H(t):=\Capa_{\theta,\psi}(\{\f<\psi-t\}),\quad t\in\mathbb{R},\] is right-continuous and $H(t)\to 0$ as $t\to+\infty$.
\end{lemma}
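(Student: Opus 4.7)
The plan addresses the two claims in turn. For right-continuity, I would argue directly from the definition: if $t_k \searrow t$, then the sublevel sets $E_k := \{\f < \p - t_k\}$ form an increasing family of Borel sets with union $E := \{\f < \p - t\}$ (the inclusion $E_k \subset E$ is clear, and conversely if $\f(x) < \p(x) - t$ strictly then $\f(x) < \p(x) - t_k$ for every sufficiently large $k$). For each admissible $u$ with $\p - 1 \leq u \leq \p$, monotone convergence yields $\int_{E_k} \MA_\theta(u) \nearrow \int_E \MA_\theta(u)$, and a routine $\e$-argument applied to the supremum in the definition of $\Capa_{\theta,\p}$ upgrades this to $H(t_k) \nearrow H(t)$.

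For the decay $H(t) \to 0$, the crux is a bound on $\int_{\{\f<\p-t\}}\MA_\theta(u)$ that is uniform in $u$. I would exploit the slack $u \geq \p - 1$ to enlarge the set as $\{\f < \p - t\} \subset \{\f < u + 1 - t\}$ and then introduce the cutoff $w := \max\bigl(\f,\, u + 1 - t\bigr)$. Since $w \geq \f \in \mathcal{E}(X,\theta)$, monotonicity of $\mathcal{E}(X,\theta)$ forces $w \in \mathcal{E}(X,\theta)$, so $\int_X \MA_\theta(w) = \Vol(\alpha)$. Plurifine locality applied on the plurifine-open sets $\{\f < u+1-t\}$ and $\{\f > u+1-t\}$, combined with translation invariance $\MA_\theta(u+c)=\MA_\theta(u)$, yields
\[
\int_{\{\f < u+1-t\}}\MA_\theta(u) + \int_{\{\f > u+1-t\}}\MA_\theta(\f) \;\leq\; \Vol(\alpha).
\]
Since $u \leq \p$ enlarges the complementary set as $\{\f > u+1-t\} \supset \{\f > \p+1-t\}$, this rearranges to the $u$-free bound
\[
\int_{\{\f < \p - t\}}\MA_\theta(u) \;\leq\; \Vol(\alpha) - \int_{\{\f > \p + 1 - t\}}\MA_\theta(\f).
\]

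To conclude I would note that as $t \to +\infty$ the sets $\{\f > \p + 1 - t\}$ increase to $\{\f > -\infty\}$, which has full $\MA_\theta(\f)$-mass: by construction of the non-pluripolar product, $\MA_\theta(\f)$ charges no pluripolar set, and $\f \in \mathcal{E}(X,\theta)$ has total mass $\Vol(\alpha)$. The right-hand side thus tends to $0$ uniformly in $u$, forcing $H(t) \to 0$.

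The main obstacle is precisely this uniformity in $u$. A naive application of monotone convergence for a fixed admissible $u$ gives only a rate of decay that depends on the singularities of $u$, and the class $\{\p-1 \leq u \leq \p\}$ is too large to pass such a bound through the supremum directly. The role of the one-unit slack $u \geq \p - 1$ together with the $\max$-cutoff is precisely to trade $u$ for a function of class $\mathcal{E}(X,\theta)$ whose mass distribution is controlled by $\f$ and $\p$ alone; the full-mass property of $\f$ then closes the argument.
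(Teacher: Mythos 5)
Your proof is correct, and it follows the same overall strategy as the paper — bound $\int_{\{\f<\psi-t\}}\MA_\theta(u)$ by a quantity independent of the candidate $u$ and let the full-mass property of $\f$ force it to zero — but the key middle step is carried out differently. The paper simply invokes the generalized comparison principle \cite[Corollary 2.3]{boucksom2010monge} to replace $\MA_\theta(u)$ by $\MA_\theta(\f)$ on $\{\f<u+1-t\}$, and then, after normalizing $\psi\le V_\theta$, bounds everything by $\int_{\{\f<V_\theta-t+1\}}\MA_\theta(\f)\to 0$. You instead re-derive the needed comparison-type inequality from scratch via the cutoff $w=\max(\f,u+1-t)$, plurifine locality on the plurifine-open sets $\{\f<u+1-t\}$ and $\{\f>u+1-t\}$, and the total mass bound $\int_X\MA_\theta(w)\le\Vol(\alpha)$ (your appeal to monotonicity of the full-mass class, or equivalently Witt Nystr\"om's monotonicity of non-pluripolar masses cited in the paper, is legitimate); your tail estimate $\Vol(\alpha)-\int_{\{\f>\psi+1-t\}}\MA_\theta(\f)\to 0$ uses only that $\MA_\theta(\f)$ charges no pluripolar set and has total mass $\Vol(\alpha)$, and it has the small advantage of requiring no normalization of $\psi$. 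In substance, your max-plus-locality argument is exactly the standard proof of the comparison principle, so the two routes deliver the same estimate: yours is more self-contained, the paper's is shorter by citation. Your right-continuity argument is the standard one the paper leaves as ``straightforward'' and is fine.
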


\begin{proof}
The proof is almost the same as the one of \cite[Lemma 2.6]{di2017complex}
in the K\"ahler case, i.e.  $\theta=\omega$ is K\"ahler. For the reader’s convenience, we give the proof here. The right-continuity is straightforward . For the second statement, we first assume that $\psi\leq V_\theta$. Fix $u\in\PSH(X,\theta)$ such that $\psi-1\leq u\leq \psi$. The generalized comparison principle (\cite[Corollary 2.3]{boucksom2010monge}) yields
\begin{align*}
    \int_{\{\f<\psi-t\}}\MA_\theta(u)\leq\int_{\{\f<u-t+1\}}\MA_\theta(u)&\leq\int_{\{\f<u-t+1\}}\MA_\theta(\f)\\ &\leq\int_{\{\f<V_\theta-t+1\}}\MA_\theta(\f).
\end{align*}
	The last term goes to $0$ as $t\to+\infty$ as $\f\in\mathcal{E}(X,\theta)$. This finishes the proof.  \end{proof} 
\begin{lemma}\label{l27}
	Let $\psi$ be a quasi-psh function such that $\theta+\dc\psi\geq \delta\omega$ for some $\delta\in(0,1)$. Then for any Borel set $K\subset X$,
	\begin{align*}
	    \Capa_\omega(K)\leq \frac{1}{\delta^n} \Capa_{\theta,\psi}(E).
	\end{align*}
\end{lemma}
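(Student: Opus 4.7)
The plan is to exhibit, for each test function in the supremum defining $\Capa_\om(K)$, a corresponding admissible competitor in the supremum defining $\Capa_{\theta,\psi}(K)$, together with a pointwise comparison of the associated Monge--Amp\`ere measures. Fix $u\in \PSH(X,\om)$ with $-1\le u\le 0$ and set
\[
v \;:=\; \psi+\delta u.
\]
Two routine checks show that $v$ is an admissible competitor for $\Capa_{\theta,\psi}(K)$. First, positivity of $\theta+\dc v$ follows from the decomposition
\[
\theta+\dc v \;=\; (\theta+\dc\psi-\delta\om)\;+\;\delta(\om+\dc u),
\]
where both summands are closed positive $(1,1)$-currents, by the hypothesis $\theta+\dc\psi\ge\delta\om$ and the $\om$-pshness of $u$. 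Second, the admissibility bound $\psi-1\le v\le\psi$ follows from $-1\le u\le 0$ and $\delta\in(0,1)$, which force $\psi-\delta\le v\le\psi$.

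The heart of the argument is then the measure inequality
\[
\MA_\theta(v)\;\ge\;\delta^n\,\MA_\om(u).
\]
I would prove this via a formal binomial expansion. Writing $A:=\theta+\dc\psi-\delta\om\ge 0$ and $B:=\om+\dc u\ge 0$, one would like to expand
\[
(\theta+\dc v)^n \;=\; (A+\delta B)^n \;=\; \sum_{k=0}^{n}\binom{n}{k}\delta^{n-k}A^{k}\wedge B^{n-k}\;\ge\;\delta^n B^n.
\]
The subtlety is that $A$ has $\psi$ as its potential, which is unbounded in general, so this identity is not an immediate Bedford--Taylor statement. To make it rigorous, I would use the canonical approximants $v^j:=\max(v,V_\theta-j)$, which are $\theta$-psh with minimal singularities and hence locally bounded on $\textrm{Amp}(\theta)$. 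On the plurifine open set $U_j:=\{v>V_\theta-j\}$, one has $v^j=v$, and since $u\le 0$ also $\psi=v-\delta u\ge v>V_\theta-j$, so $\psi$ is locally bounded on $U_j\cap\textrm{Amp}(\theta)$. On this set all currents appearing in the binomial expansion have locally bounded potentials, so the inequality is a legitimate Bedford--Taylor calculation (each wedge product of positive currents with locally bounded potentials is itself positive). Plurifine locality of the non-pluripolar product then identifies $\mathbf{1}_{U_j}\MA_\theta(v^j)$ with $\mathbf{1}_{U_j}\MA_\theta(v)$; passing $j\to\infty$, together with the fact that $X\setminus\textrm{Amp}(\theta)$ and $\{\psi=-\infty\}$ are pluripolar and hence negligible for non-pluripolar measures, upgrades the inequality to $\MA_\theta(v)\ge\delta^n\MA_\om(u)$ globally.

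Granted this measure comparison, the conclusion is immediate: for any Borel $K\subset X$,
\[
\delta^n\int_K(\om+\dc u)^n\;\le\;\int_K\MA_\theta(v)\;\le\;\Capa_{\theta,\psi}(K),
\]
and taking the supremum over admissible $u$ yields $\delta^n\Capa_\om(K)\le\Capa_{\theta,\psi}(K)$, which rearranges to the claimed estimate. The main obstacle is making the binomial expansion of the non-pluripolar Monge--Amp\`ere measure rigorous in the presence of the unbounded potential $\psi$; the rest is a direct algebraic verification together with a standard application of plurifine locality.
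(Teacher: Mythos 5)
Your proposal is correct and follows essentially the same route as the paper: take an admissible $u$ for $\Capa_\omega$, pass to the competitor $\psi+\delta u$ for $\Capa_{\theta,\psi}$, and use $\delta^n(\omega+\dc u)^n\leq(\theta+\dc\psi+\delta\dc u)^n$. The only difference is that you carefully justify this measure inequality via canonical approximants and plurifine locality, a step the paper treats as standard multilinearity of the non-pluripolar product.
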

\begin{proof}
Let $u$ be a $\omega$-psh function such that $-1\leq u\leq 0$. We then have that $\f:={\psi+\delta u}$ is a candidate defining $\Capa_{\theta,\psi}$. It follows that
\begin{align*}
    \delta^n\int_K(\omega+\dc u)^n&\leq \int_K(\theta+\dc\psi+\dc(\delta u))^n\leq \Capa_{\theta,\psi}(K),
\end{align*}
and taking the supremum over all $u$ we get the desired estimate.
\end{proof}

Generalizing \cite[Ineq. (2.3.2)]{kolodziej1998complex}, we have the following result which is a simple consequence of the {\em generalized comparison principle} 
(\cite[Corollary 2.3]{boucksom2010monge}).
	\begin{lemma}\label{p28}
		Let $\psi\in\PSH(X,\theta)$ and $\f\in\mathcal{E}(X,\theta)$. Then for all $t>0$ and $0<s\leq 1$ we have 
		\begin{align*}
		s^n\Capa_{\theta,\psi}(\{\f<\psi-t-s\})\leq\int_{\{\f<\psi-t\}}\MA_{\theta}(\f).
		\end{align*}
	\end{lemma}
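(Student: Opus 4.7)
The plan is to follow Kołodziej's classical trick of replacing a candidate $u$ by a convex combination of $u$ and $\psi$, which produces a $\theta$-psh function sitting between $\psi - s$ and $\psi$ and whose Monge--Amp\`ere measure dominates $s^n \MA_\theta(u)$. Once such a function is in hand, the generalized comparison principle of \cite{boucksom2010monge} (Corollary 2.3 there, used already in the proof of Lemma~\ref{lem: rightcontinuous} above) does the rest.

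More precisely, fix a candidate $u \in \PSH(X,\theta)$ with $\psi - 1 \leq u \leq \psi$ for the capacity $\Capa_{\theta,\psi}(\{\f < \psi - t - s\})$, and set
\[
v := s u + (1-s)\psi \in \PSH(X,\theta).
\]
Since $-1 \leq u - \psi \leq 0$, we have $\psi - s \leq v \leq \psi$, so $u$, $\psi$ and $v$ all share the same singularity type. Expanding
\[
\theta + \dc v = s(\theta + \dc u) + (1-s)(\theta + \dc \psi)
\]
by the binomial formula and retaining only the pure $(\theta+\dc u)^n$ term (valid at the level of non-pluripolar products since all functions involved have the same singularity type, so every mixed non-pluripolar product is a well-defined positive measure) yields
\[
\MA_\theta(v) \geq s^n \MA_\theta(u).
\]

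Next, from $v \geq \psi - s$ we get $\{\f < \psi - t - s\} \subset \{\f < v - t\}$, and from $v \leq \psi$ we get $\{\f < v - t\} \subset \{\f < \psi - t\}$. Applying the generalized comparison principle to $\f \in \mathcal{E}(X,\theta)$ and $v - t \in \PSH(X,\theta)$ (noting that $\MA_\theta(v-t) = \MA_\theta(v)$) gives
\[
\int_{\{\f < v - t\}} \MA_\theta(v) \leq \int_{\{\f < v - t\}} \MA_\theta(\f).
\]
Chaining these bounds produces
\[
s^n \int_{\{\f < \psi - t - s\}} \MA_\theta(u) \leq \int_{\{\f < v - t\}} \MA_\theta(v) \leq \int_{\{\f < \psi - t\}} \MA_\theta(\f),
\]
and taking the supremum over all admissible $u$ yields the desired inequality.

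The only subtle point is the pointwise lower bound $\MA_\theta(v) \geq s^n \MA_\theta(u)$ for non-pluripolar products. This is the step I expect to require the most care, because multilinearity of $\MA_\theta$ behaves well only when the singularity types of the entries match (otherwise the mixed non-pluripolar products can drop mass). In our setting the uniform bounds $\psi - 1 \leq u \leq \psi$ and $\psi - s \leq v \leq \psi$ guarantee $u \simeq \psi \simeq v$, so the standard multilinearity of the non-pluripolar product applies and the inequality follows by dropping all the nonnegative cross terms in the binomial expansion.
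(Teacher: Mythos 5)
Your proposal is correct and follows essentially the same argument as the paper: take a candidate $u$, form $v=su+(1-s)\psi$, use $s^n\MA_\theta(u)\leq\MA_\theta(v)$ together with the inclusions $\{\f<\psi-t-s\}\subset\{\f<v-t\}\subset\{\f<\psi-t\}$, and conclude via the generalized comparison principle of \cite{boucksom2010monge}. The extra care you take about multilinearity is fine (and here the candidates indeed have the same singularity type as $\psi$), so there is nothing to add.
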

	
\begin{proof}
	Let $u$ be a $\theta$-psh function such that $\psi-1\leq u\leq \psi$. We then have
	\begin{align*}
	\{\f<\psi-t-s\}\subset\{\f<su+(1-s)\psi-t \}\subset\{\f<\psi-t \}.
	\end{align*}
	Since $s^n\MA_\theta(u)\leq \MA_{\theta}(su+(1-s)\psi)$ and $\f$ has full Monge-Amp\`ere mass, it follows from  the {generalized comparison principle} (\cite[Corollary 2.3]{boucksom2010monge}) that
	\begin{align*}
	s^n\int_{\{\f<\psi-t-s\}}\MA_{\theta}(u)&\leq\int_{\{\f<\psi-t-s\}}\MA_{\theta}(su+(1-s)\psi)\\
	&\leq \int_{\{\f<su+(1-s)\psi-t\}}\MA_{\theta}(su+(1-s)\psi)\\
	&\leq\int_{\{\f<su+(1-s)\psi-t\}}\MA_{\theta}(\f)\leq \int_{\{\f<\psi-t\}}\MA_{\theta}(\f).
	\end{align*}
Since $u$ was taken arbitrarily as a candidate in the definition of $\Capa_{\theta,\psi}$, the proof  therefore finishes.
\end{proof}

\subsection{Quasi-psh envelopes}\label{sect: envelope}
For a Borel function $h$, we let $P_{\theta}(h)$ denote the largest $\theta$-psh function lying below $h$:
\[ P_\theta(h):=\left(\sup\{\f\in\PSH(X,\theta):\f\leq h \;\text{on}\; X \}\right)^*.\]

	\begin{proposition}
	\label{prop_imp}
	  Fix $\f\in\mathcal{E}(X,\theta)$. Then for any $b>0$, $P_\omega(b\f-bV_\theta)$ is a $\omega$-psh function with full Monge-Amp\`ere mass.    
	\end{proposition}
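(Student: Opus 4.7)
The plan is to approximate $\f$ by functions with bounded excess singularities relative to $V_\theta$, establish the proposition in that easier setting, and then pass to a decreasing limit using monotonicity of non-pluripolar Monge--Amp\`ere mass.

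First I would normalize so that $\sup_X\f=0$, giving $\f\leq V_\theta\leq 0$, and introduce the canonical approximants $\psi_j:=\max(\f,V_\theta-j)\in\PSH(X,\theta)$. These have minimal singularities, satisfy $\psi_j-V_\theta\in[-j,0]$ on the ample locus (so $h_j:=b\psi_j-bV_\theta$ is bounded in $[-bj,0]$), and decrease to $\f$. The constant $-bj$ is $\omega$-psh and $\leq h_j$ everywhere, so $u_j:=P_\omega(h_j)\in[-bj,0]$ is a bounded $\omega$-psh function; since bounded $\omega$-psh functions have full Monge--Amp\`ere mass, $u_j\in\mathcal{E}(X,\omega)$. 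Because $h_j\downarrow b\f-bV_\theta$, a routine double-inequality argument for envelopes yields $u_j\downarrow u:=P_\omega(b\f-bV_\theta)$.

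To conclude that $u\in\mathcal{E}(X,\omega)$, I would apply the monotonicity of non-pluripolar Monge--Amp\`ere mass (Witt Nystr\"om; Darvas--Di Nezza--Lu): it suffices to exhibit some $w\in\mathcal{E}(X,\omega)$ with $w\leq b\f-bV_\theta$. For $b\in(0,1]$ the natural candidate is $w:=b\f+(1-b)V_\theta$. The decomposition
\[
\omega+dd^c w=(\omega-\theta)+b(\theta+dd^c\f)+(1-b)(\theta+dd^c V_\theta)
\]
exhibits $w$ as $\omega$-psh (sum of positive currents); the inequality $w\leq b\f-bV_\theta$ follows from $V_\theta\leq 0$; and expanding $(\omega+dd^c w)^n$ by the multinomial formula and invoking the monotonicity of mixed non-pluripolar products for the full-mass functions $\f,V_\theta\in\mathcal{E}(X,\theta)$ reduces $\int_X(\omega+dd^c w)^n$ to the cohomological value $\int_X\omega^n$, placing $w$ in $\mathcal{E}(X,\omega)$.

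The main obstacle is the regime $b>1$, where $b\f+(1-b)V_\theta$ ceases to be $\omega$-psh because the coefficient $(1-b)$ is negative and $dd^c V_\theta$ admits no upper bound. To handle this case I would use Demailly's equisingular approximation (Theorem~\ref{thm: dem}) to replace $\f$ by a decreasing sequence of $\theta$-psh functions with analytic singularities, for which the explicit local pole structure together with the analytic description of $V_\theta$ (via Boucksom's theorem recalled in Section~\ref{sect: big}) allows the construction of an explicit $\omega$-psh lower bound for $b\f-bV_\theta$ on each approximant; passing to the limit, once more using monotonicity of non-pluripolar mass, would then yield a suitable $w$ for arbitrary $b>0$.
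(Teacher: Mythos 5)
Your reduction is fine as far as it goes: if one can produce $w\in\mathcal{E}(X,\omega)$ with $w\leq b\f-bV_\theta$, then $w\leq P_\omega(b\f-bV_\theta)$ and Witt Nystr\"om's monotonicity gives full mass. The gap is that your candidate $w=b\f+(1-b)V_\theta$ (for $b\in(0,1]$) is \emph{not} in $\mathcal{E}(X,\omega)$ in general, and the multinomial computation does not give $\int_X(\omega+\dc w)^n=\int_X\omega^n$. Replacing the full-mass potentials $\f$ and $V_\theta$ (full mass \emph{with respect to $\theta$}) by $V_\theta$ in the mixed terms and resumming yields at best $\int_X(\omega+\dc w)^n=\int_X(\omega+\dc V_\theta)^n$, i.e.\ the non-pluripolar mass of $V_\theta$ viewed as an $\omega$-psh function, which is strictly smaller than $\int_X\omega^n$ whenever $V_\theta\notin\mathcal{E}(X,\omega)$. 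More concretely, with your normalization $w\leq V_\theta$, so $\nu(w,x)\geq\nu(V_\theta,x)$; since elements of $\mathcal{E}(X,\omega)$ have zero Lelong numbers \cite[Corollary 1.8]{guedj2007weighted}, $w\notin\mathcal{E}(X,\omega)$ as soon as $V_\theta$ has a positive Lelong number somewhere --- which, as stressed in the introduction, is exactly the situation a general big class presents (take $b=1$ and $\f=V_\theta$: your claim would force $V_\theta\in\mathcal{E}(X,\omega)$). Full Monge--Amp\`ere mass for $\theta$ does not transfer to full mass for $\omega$; the whole point of the proposition is that the envelope $P_\omega(b\f-bV_\theta)$ strips off the common singularities, whereas your $w$ retains them.

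The case $b>1$ is only a sketch and does not contain the needed idea. Demailly approximants $\f_m$ decrease to $\f$, so any $\omega$-psh lower bound for $b\f_m-bV_\theta$ is not a lower bound for $b\f-bV_\theta$, and "passing to the limit" would require continuity of non-pluripolar mass along the decreasing sequence of envelopes, which fails in general; indeed, if such continuity held, your own first step (the bounded envelopes $P_\omega(b\max(\f,V_\theta-j)-bV_\theta)$ decreasing to $P_\omega(b\f-bV_\theta)$) would already prove the proposition with no further work. The paper works precisely with those canonical approximants, but the two missing ingredients are: (i) the measure $(\omega+\dc P_\omega(b\f_j-bV_\theta))^n$ is carried by the contact set and dominated there by $b^n((1+\tfrac1b)\omega+\dc\f_j)^n$ (Lemma~\ref{l2}, after \cite{darvas2020metric}), which rules out $\sup_X P_\omega(b\f_j-bV_\theta)\to-\infty$ and shows the envelope is a genuine $\omega$-psh function for every $b>0$; and (ii) full mass is then obtained not from a full-mass minorant but from the elementary inequality $P_\omega(b\f-bV_\theta)\geq\frac{b}{A}P_\omega(A\f-AV_\theta)$, monotonicity \cite{witt2019monotonicity}, and letting $A\to+\infty$. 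Without (i) and with the flawed choice of $w$, your argument does not establish either the nontriviality of the envelope for large $b$ or the full mass statement.
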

	The proof given below is inspired by \cite[Lemma 4.3]{darvas2020metric}.
	
\begin{proof} 
 We first  show that the function $P_\omega(b\f-bV_\theta)\not\equiv-\infty$  for all $b>0$.

    For each $j\in\mathbb{N}$ we set $\f_j:=\max(\f,V_\theta-j)$ and $\psi_j:=P_\omega(b\f_j-bV_\theta)$. We observe that $(\psi_j)$ is a decreasing sequence of $\omega$-psh functions, and  $\psi_j\geq -jb$ for each $j$. 
    Therefore the proof would follow if we could show that $\lim_j\psi_j$ is not identically $-\infty$. We let for each $j$, $D_j:=\{\psi_j=b\f_j-bV_\theta\}$ denote the contact set. Observe that the sets $D_j$ are non-empty for $j$ large enough.
     Fix $t>0$. We see that  $$\{\psi_j\leq -t\}\cap D_j=\{\f_j\leq V_\theta-t/b\}\subset\{\f\leq V_\theta-t/b\}.$$ Set $\Tilde{\omega}:=\left( \frac{1}{b}+1\right)\omega$. By Lemma \ref{l2} below and plurifine locality we  have for $j>t/b$, 
    \begin{equation}\label{eq: ineq1}
    \begin{aligned}[c]
        \int_{\{\psi_j\leq -t\}}(\omega+\dc \psi_j)^n &= \int_{\{\psi_j\leq -t\}}{\bf 1}_{D_j}(\omega+\dc \psi_j)^n\\
        &\leq b^n\int_{\{\psi_j\leq -t\}}{\bf 1}_{D_j}(\Tilde{\omega}+\dc\f_j)^n\\
        &\leq b^n\int_{\{\f\leq V_\theta-t/b\}}(\Tilde{\omega}+\dc{\f_j})^n\\
        &=b^n\left(\int_X(\Tilde{\omega}+\dc{\f_j})^n-\int_{\{\f>V_\theta-t/b\}}(\Tilde{\omega}+\dc\f)^n \right),
    \end{aligned}
    \end{equation}
    since $\f_j=\f$ on $\{\f>V_\theta-t/b\}$ for $j>t/b$.
    Suppose by contradiction that $\sup_X\psi_j\rightarrow-\infty$ as $j\to+\infty$. It thus follows that $\{\psi_j\leq -t\}=X$ for $j$ large enough, $t$ being fixed. Hence, for $j>0$ large enough, \eqref{eq: ineq1} becomes
    \begin{equation*}
     \int_X\omega^n\leq b^n\left( \int_X(\Tilde{\omega}+\dc\f_j)^n-\int_{\{\f>V_\theta-t/b\}}(\Tilde{\omega}+\dc\f)^n\right).
    \end{equation*} 
  Letting $j\rightarrow +\infty$, we obtain
  \begin{equation}\label{ineq: contradict}
  \int_X\omega^n\leq b^n\left( \int_X(\Tilde{\omega}+\dc\f)^n-\int_{\{\f>V_\theta-t/b\}}(\Tilde{\omega}+\dc\f)^n\right),
  \end{equation}
  where we have used that $$(\Tilde{\omega}+\dc\f_j)^n=\sum_{k=0}^n\binom{n}{k}(\Tilde{\omega}-\theta)^k\wedge(\theta+\dc\f_j)^{n-k}\to (\Tilde{\omega}+\dc\f)^n$$ in the weak sense of measures on $X$, thanks to \cite[Theorem 2.3, Remark 2.5]{darvas2018monotonicity}. 
    Finally, letting $t\rightarrow+\infty$, in \eqref{ineq: contradict} 
    we obtain a contradiction. Consequently,  $\psi_j$ decreases to a $\omega$-psh function, we infer that $P_\omega(b\f-bV_\theta)$ is a $\omega$-psh function for any $b>0$.
    
  It remains to show that $P_\omega(b\f-bV_\theta)$ has full Monge-Amp\`ere mass. Observe that
   $ P_\omega(b\f-bV_\theta)\geq\frac{b}{A}P_\omega(A\f-AV_\theta)
    $ for $A>b$.
    Using monotonicity of mass (see e.g. \cite[Theorem 1.2]{witt2019monotonicity}), we obtain
    \begin{align*}
        \int_X\left(\omega+\dc P_\omega(b\f-bV_\theta)\right)^n\geq  \left(1-\frac{b}{A}\right)^n\int_X\omega^n +\left(\frac{b}{A}\right)^n\int_X\left(\omega+\dc P_\omega(A\f-AV_\theta)\right)^n. \end{align*}
    Letting $A\to+\infty$ we thus finish the proof. 
\end{proof}	

\begin{lemma}\label{l2}
 Fix $b>0$, $\f$ and $P_\omega(b\f-bV_\theta)\in\PSH(X,\omega)$. 
 Then the measure $(\omega+\dc {P_\omega(b\f-bV_\theta)})^n$ is supported on the contact set $D:=\{P_\omega(b\f-bV_\theta)=b\f-bV_\theta \}$, and $${\bf 1}_D(\omega+\dc{P_\omega(b\f-bV_\theta)})^n\leq  b^n\mathbf{1}_D\left(\left(1+\frac{1}{b}\right)\omega+\dc\f\right)^n. $$
\end{lemma}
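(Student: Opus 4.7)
The plan proceeds in two steps, separating the support property from the inequality, and tying both to an approximation by bounded data.

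For the concentration on the contact set, I would introduce the canonical approximants $\f_j := \max(\f, V_\theta - j) \in \PSH(X,\theta)$. These have the same singularity type as $V_\theta$, so $h_j := b\f_j - bV_\theta$ is bounded above by $0$ and locally bounded below on $\mathrm{Amp}(\theta)$. Setting $u_j := P_\omega(h_j)$, the envelopes $u_j$ decrease to $u := P_\omega(b\f - bV_\theta)$. For each $j$, since $h_j$ is a difference of two $\theta$-psh functions of minimal singularity type, the standard balayage/maximality principle for envelopes of bounded quasi-psh obstacles gives
\begin{align*}
\mathbf{1}_{X \setminus D_j}(\omega + \dc u_j)^n = 0, \qquad D_j := \{u_j = h_j\}.
\end{align*}
Passing to the limit along $u_j \downarrow u$ using monotonicity of non-pluripolar Monge--Amp\`ere measures along decreasing sequences (cf.\ \cite{darvas2018monotonicity}) yields the concentration of $(\omega + \dc u)^n$ on $D$.

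For the inequality, the key observation is the algebraic identity valid on $D$, where $u = b\f - bV_\theta$:
\begin{align*}
 (1+b)\omega + b\,\dc\f \;=\; (\omega + \dc u) + b(\omega + \dc V_\theta),
\end{align*}
which holds as an identity of currents in the plurifine-local sense. Both summands on the right are positive $(1,1)$-currents, noting that $V_\theta \in \PSH(X,\omega)$ since $\theta \leq \omega$. Taking the $n$-th exterior power and expanding via the binomial formula,
\begin{align*}
\bigl((1+b)\omega + b\,\dc\f\bigr)^n = \sum_{k=0}^{n}\binom{n}{k}\, b^{n-k}\, (\omega + \dc u)^k \wedge (\omega + \dc V_\theta)^{n-k},
\end{align*}
all summands being non-negative currents. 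Keeping only the $k = n$ term yields on $D$:
\begin{align*}
 (\omega + \dc u)^n \;\leq\; \bigl((1+b)\omega + b\,\dc\f\bigr)^n \;=\; b^n\bigl((1+1/b)\omega + \dc\f\bigr)^n,
\end{align*}
which is the claimed inequality.

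The principal technical obstacle is making the plurifine-local identity rigorous when $V_\theta$ is not globally bounded and the contact set $D$ is only plurifine-closed (rather than plurifine-open). I would circumvent this by first establishing the inequality in the bounded setting with $u_j$ and $\f_j$ in place of $u$ and $\f$, where everything reduces to Bedford--Taylor theory and standard plurifine locality on the bounded contact sets $D_j$. One then passes to the limit $j \to \infty$: on the right-hand side one uses monotone convergence along the canonical approximants, as already invoked in the proof of Proposition~\ref{prop_imp}, while on the left one exploits the fact that the sets $D_j\cap\{\f>V_\theta-j\}$ exhaust the non-pluripolar part of $D$, so that the integrated inequality against any test Borel set persists at the limit.
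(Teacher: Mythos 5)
There is a genuine gap at the heart of your argument for the inequality. Your ``key observation'' treats the pointwise equality $P_\omega(b\f-bV_\theta)=b\f-bV_\theta$ on the contact set $D$ as if it entailed an identity of $(1,1)$-currents restricted to $D$, namely $\dc P_\omega(b\f-bV_\theta)=b\,\dc\f-b\,\dc V_\theta$ on $D$. This is not a legitimate step: equality of two (differences of) quasi-psh functions on a Borel set does not give equality of their complex Hessians, even measure-theoretically, on that set. Plurifine locality only applies on plurifine \emph{open} sets, and $D$ is plurifine closed; you acknowledge this, but your proposed fix does not repair it, because the same obstruction is present verbatim in the bounded setting: for the approximants $u_j=P_\omega(b\f_j-bV_\theta)$ the contact sets $D_j$ are again only plurifine closed, so ``Bedford--Taylor theory and standard plurifine locality on $D_j$'' still does not yield the decomposition $(1+b)\omega+b\,\dc\f_j=(\omega+\dc u_j)+b(\omega+\dc V_\theta)$ as currents on $D_j$. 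In addition, both of your limit passages (for the concentration statement and for the final inequality) ask indicator functions of the varying sets $D_j\cap\{\f>V_\theta-j\}$ to interact with weak convergence of Monge--Amp\`ere measures; weak convergence does not restrict to such Borel sets, so these steps are not justified as written. (Also note the inclusion goes the wrong way for your exhaustion claim: one only gets $D\cap\{\f>V_\theta-j\}\subset D_j$, not that the $D_j$'s shrink to $D$.)

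What replaces the illegitimate identity is a one-sided comparison of Monge--Amp\`ere measures on contact sets, which is exactly how the paper argues: set $u=\frac{1}{b}P_\omega(b\f-bV_\theta)+V_\theta$, a $\tilde{\omega}$-psh function with $\tilde{\omega}=(1+\frac1b)\omega$ and $u\leq\f$, so that $\{u=\f\}=D$. Then, \emph{globally} and without any restriction to $D$, one has the current inequality
\begin{equation*}
\tilde{\omega}+\dc u=\tfrac{1}{b}\bigl(\omega+\dc P_\omega(b\f-bV_\theta)\bigr)+(\omega+\dc V_\theta)\;\geq\;\tfrac{1}{b}\bigl(\omega+\dc P_\omega(b\f-bV_\theta)\bigr),
\end{equation*}
since $\omega\geq\theta$ makes $\omega+\dc V_\theta\geq 0$; hence $(\tilde{\omega}+\dc u)^n\geq b^{-n}(\omega+\dc P_\omega(b\f-bV_\theta))^n$. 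The missing ingredient is then the contact-set inequality $\mathbf{1}_{\{u=\f\}}(\tilde{\omega}+\dc u)^n\leq\mathbf{1}_{\{u=\f\}}(\tilde{\omega}+\dc\f)^n$ valid for $u\leq\f$ (\cite[Corollary~10.8]{guedj2017degenerate}), which together with the concentration of $(\omega+\dc P_\omega(b\f-bV_\theta))^n$ on $D$ (\cite[Lemma~4.4]{darvas2020metric}) gives the claimed bound directly. No identity of Hessians on $D$, no binomial expansion, and no approximation by $\f_j$ is needed; your argument as it stands cannot be completed without importing such a contact-set comparison lemma, at which point the decomposition you propose becomes superfluous.
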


\begin{proof}
   We refer the readers to \cite[Lemma 4.4]{darvas2020metric} for a proof of  the  first statement.
    
    For the second one, set $u=\frac{1}{b}P_\omega(b\f-bV_\theta)+V_\theta$. Then $u$ is a $\Tilde{\omega}:=\left( \frac{1}{b}+1\right)\omega$-psh function, and $u\leq \f$. It follows from \cite[Corollary 10.8]{guedj2017degenerate} that
    \begin{align}\label{ineq1: lem2}
        \mathbf{1}_{\{u=\f\}}(\Tilde{\omega}+\dc u)^n\leq \mathbf{1}_{\{u=\f\}}(\Tilde{\omega}+\dc \f)^n.
    \end{align}
    Furthermore, the measure $(\omega+\dc{P_\omega(b\f-bV_\theta)})^n$ is supported on the contact set $D=\{b\f-bV_\theta=P_\omega(b\f-bV_\theta)\}=\{u=\f\}$, hence
    \begin{equation}
       \begin{aligned}[t]
    \label{ineq2: lem2}
        \dfrac{1}{b^n}(\omega+\dc{P_\omega(b\f-bV_\theta)})^n&=\mathbf{1}_{\{u=\f\}}\dfrac{1}{b^n}(\omega+\dc{P_\omega(b\f-bV_\theta)})^n\\
        &\leq\mathbf{1}_{\{u=\f\}}(\Tilde{\omega}+\dc u)^n.
    \end{aligned}
    \end{equation}
   Combining \eqref{ineq1: lem2} and \eqref{ineq2: lem2} we obtain the desired estimate.
\end{proof}

\section{Regularity of solutions} 	

		\subsection{Proof of the {Main Theorem} }\label{sect: main}
		
		In this section we prove Theorem~\ref{thm1}. 
		The key ingredient is an adaptation of Di Nezza-Lu's approach  \cite{di2017complex} (see also \cite{di2015generalized}).
		
		Given a non-negative Radon measure $\mu$ whose total mass is $\Vol(\alpha)$,
 we consider the  Monge-Amp\`ere   equation
 \begin{equation}\label{cmae}
 \MA_{\theta}(\f)=\mu.
 \end{equation}
 The systematic study of such equations in big cohomology classes has been initiated in \cite{boucksom2010monge}. It has been shown there that \eqref{cmae} admits a unique normalized solution $\f\in \mathcal{E}(X,\theta)$ if and only if $\mu$ is a {\it non pluripolar} measure on $X$.  

 Our goal  is to prove the following: 
 
	\begin{theorem}
 \label{mainthm} Assume $\nu=gdV$ to be a Radon measure, with $0\leq g\in L^p(dV)$ for some $p>1$. Let $\mu$ be a non-pluripolar measure such that $\mu(X)=\Vol(\alpha)$.
 Assume that $\mu=fd\nu$, with $f\leq e^{-\phi}$ for some quasi-psh function $\phi$ on $X$. 
 Let $\f\in\mathcal{E}(X,\theta)$ be the unique normalized solution to \eqref{cmae}. Then
 $\f$ is continuous on $\textrm{Amp}(\alpha)\setminus E_{1/q}(\phi)$, where $q$ denotes the conjugate exponent of $p$.
 \end{theorem}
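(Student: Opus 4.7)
Following the Di Nezza--Lu strategy \cite{di2017complex}, the plan is to construct a $\theta$-psh reference with analytic singularities contained in $E_{1/q}(\phi)\cup(X\setminus{\rm Amp}(\alpha))$. I apply Demailly's equisingular approximation (Theorem~\ref{thm: dem}) to $\phi$ to obtain $\phi_m$ with analytic singularities in $E_{1/m}(\phi)$, $\theta+\dc\phi_m\geq-\e_m\om$, and $\int_X e^{2m(\phi_m-\phi)}\,dV<+\infty$. Since $\alpha$ is big, Boucksom \cite{boucksom2004divisorial} provides a K\"ahler current $\theta+\dc\rho\geq\delta\om$ with $\rho$ smooth on ${\rm Amp}(\alpha)$. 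I then form the convex combination
$$\psi_m:=(1-t_m)\phi_m+t_m\rho,\qquad t_m:=\frac{\e_m}{\delta+\e_m},$$
which is genuinely $\theta$-psh, has analytic singularities contained in $E_{1/m}(\phi)\cup(X\setminus{\rm Amp}(\alpha))$, is smooth on ${\rm Amp}(\alpha)\setminus E_{1/m}(\phi)$, and, after a secondary H\"older argument disentangling $\rho-\phi$ from $\phi_m-\phi$, still satisfies $\int_X e^{b(\psi_m-\phi)}\,dV<+\infty$ for every $b$ slightly below $2m$.

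\textbf{Step 2 (Capacity estimate).} I next establish the key bound
$$\mu(K)\le A_m\cdot\Capa_{\theta,\psi_m}(K)^{1+\tau_m}\qquad\text{for every Borel } K\subset X,$$
for constants $A_m,\tau_m>0$. Writing $fg\le e^{-\psi_m}\,e^{\psi_m-\phi}\,g$ and applying H\"older's inequality with exponents $(a,b,c)$ satisfying $1/a+1/b+1/c=1$ --- $c$ just below $p$ to absorb $g\in L^p(dV)$, $b$ just below $2m$ to absorb $e^{\psi_m-\phi}$ by Step~1, and the remaining $a$ (which tends to $q$ as $m\to\infty$) for the factor $e^{-a\psi_m}$ --- yields $\mu(K)\le C_m\bigl(\int_K e^{-a\psi_m}\,dV\bigr)^{1/a}$. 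Since the Lelong numbers satisfy $\nu(\psi_m,\cdot)=(1-t_m)\nu(\phi_m,\cdot)+t_m\nu(\rho,\cdot)$ and stay below the Skoda threshold compatible with $a\approx q$ on compact subsets of ${\rm Amp}(\alpha)\setminus E_{1/m}(\phi)$, the density $e^{-a\psi_m}$ lies in $L^{1+\eta}_{\rm loc}(dV)$ for some $\eta>0$; Lemma~\ref{l1} combined with Lemma~\ref{l27} (exploiting the $\delta t_m\om$-strict positivity of $\theta+\dc\psi_m$ gained through $\rho$) then converts this into the desired inequality against $\Capa_{\theta,\psi_m}$.

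\textbf{Step 3 (Uniform bound and continuity).} With the capacity estimate in hand, Lemma~\ref{p28} and Lemma~\ref{lem: rightcontinuous} drive a Ko\l{}odziej/De Giorgi iteration (as in \cite{di2017complex}) yielding $\f\ge\psi_m-C_m$, so $\f$ is locally bounded on ${\rm Amp}(\alpha)\setminus E_{1/m}(\phi)$. To upgrade to continuity I approximate $f$ by bounded truncations $f^{(k)}:=\min(f,e^k)$, solve $\MA_\theta(\f^{(k)})=c_k f^{(k)}g\,dV$ whose solutions are (H\"older) continuous on ${\rm Amp}(\alpha)$ by \cite{boucksom2010monge,demailly2014holder} (since $f^{(k)}g\in L^p$ with $f^{(k)}$ bounded), and apply the same capacity estimate uniformly in $k$ to get $\f^{(k)}\ge\psi_m-C$ uniformly. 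Monotone convergence $\mu^{(k)}\nearrow\mu$ together with this uniform lower bound and a standard stability argument yields uniform convergence $\f^{(k)}\to\f$ on compact subsets of ${\rm Amp}(\alpha)\setminus E_{1/m}(\phi)$, hence continuity of $\f$ there. Finally, upper semi-continuity of Lelong numbers places any compact $K\subset{\rm Amp}(\alpha)\setminus E_{1/q}(\phi)$ inside some ${\rm Amp}(\alpha)\setminus E_{1/m}(\phi)$ with $m$ slightly larger than $q$, giving the claim.

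\textbf{Main obstacle.} The delicate step is the capacity estimate of Step~2: balancing the three H\"older exponents so that the $L^p$ density, the Demailly integrability $e^{2m(\phi_m-\phi)}\in L^1$, and the Skoda integrability of $e^{-a\psi_m}$ fit together simultaneously, with $a\to q$ and the Lelong numbers of $\psi_m$ converging to those of $\phi$. These constraints meet precisely at the threshold $\nu(\phi,\cdot)<1/q$, explaining the appearance of $E_{1/q}(\phi)$ in the statement. Working in the big (rather than nef) class adds the further subtlety that $V_\theta$ itself may carry positive Lelong numbers, necessitating the use of the generalized capacity $\Capa_{\theta,\psi_m}$ together with the K\"ahler current adjustment in Step~1 to recover the strict positivity needed to invoke Lemma~\ref{l27}.
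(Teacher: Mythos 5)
Your Step 2 is where the argument breaks down. You claim the estimate $\mu(K)\le A_m\,\Capa_{\theta,\psi_m}(K)^{1+\tau_m}$ \emph{for every Borel set} $K\subset X$, but the justification you give only yields integrability of $e^{-a\psi_m}$ (with $a>q$) on compact subsets of ${\rm Amp}(\alpha)\setminus E_{1/m}(\phi)$, where $\psi_m$ is smooth. Globally this density need not even be in $L^1(dV)$: the Lelong numbers of $\phi$, hence of $\phi_m$ and of $\psi_m$, can be arbitrarily large along $E_{1/m}(\phi)\ne\emptyset$, so $e^{-a\psi_m}$ is non-integrable near such points and Lemma~\ref{l1} cannot be invoked. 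The same problem affects the auxiliary claim $\int_X e^{b(\psi_m-\phi)}dV<+\infty$ for $b$ close to $2m$: the factor $e^{b t_m(\rho-\phi)}$ needs a Skoda-type bound $b\,t_m\,\nu(\phi,x)<2$ at every point, and nothing in Theorem~\ref{thm: dem} controls $m\e_m$ or $\nu(\phi,\cdot)$. Yet a \emph{global} measure--capacity domination is exactly what your Step 3 requires, since the Ko\l{}odziej/De~Giorgi iteration via Lemma~\ref{p28} runs over the sublevel sets $\{\f<\psi_m-t\}$, which cannot a priori be localized away from $E_{1/m}(\phi)$ --- the location of the poles of $\f$ is precisely what is to be determined. (Two smaller points: with $t_m=\e_m/(\delta+\e_m)$ you only get $\theta+\dc\psi_m\ge 0$, not the strict positivity needed for Lemma~\ref{l27}, though enlarging $t_m$ fixes this; and your truncation/stability argument for upgrading boundedness to continuity is asserted rather than proved --- stability of $\mathcal{E}(X,\theta)$ solutions in the big setting is not off-the-shelf.)

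The missing idea is the envelope device that the paper introduces precisely to restore global integrability in the big case. The paper takes the reference $\psi=\rho_0+a\phi_m$ with a \emph{small} coefficient $a$ (so $b=1/a$ is large), notes $b\psi\le bV_\theta+\phi_m$, and on sublevel sets bounds $e^{b(\psi-\f)}e^{-\phi_m}\le e^{-b(\f-V_\theta)}\le e^{-P_\om(b\f-bV_\theta)}$; Proposition~\ref{prop_imp} shows $P_\om(b\f-bV_\theta)$ is $\om$-psh with full Monge--Amp\`ere mass, hence has zero Lelong numbers, so $e^{-2P_\om(b\f-bV_\theta)}\in L^1(dV)$ globally by Skoda. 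That global input makes the capacity iteration close on all of $X$, and taking $a$ small costs nothing: $\f\ge\rho_0+a\phi_m-A$ still gives local boundedness off $E_{1/m}(\phi)$, whereas your insistence on a coefficient close to $1$ in front of $\phi_m$ forces the unattainable exponent $a\approx q$ globally. For the continuity step, the paper approximates $\f$ from above by $\f_j=P_\theta(h_j)$ with $h_j$ smooth decreasing, uses $\MA_\theta(\f_j)\le {\bf 1}_{\{\f_j=h_j\}}\MA_\theta(h_j)$ to get bounded densities and hence continuity of $\f_j$ on ${\rm Amp}(\alpha)$ via \cite{demailly2014holder}, and proves local uniformity of the decrease by a second iteration relative to $\psi_j=\lambda\psi+(1-\lambda)\f_j-(A+2)\lambda$. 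Finally, the threshold $E_{1/q}(\phi)$ comes out simply by choosing $m=[q]$ in Demailly's approximation, so that $2m>q$ lets $e^{\phi_m-\phi}$ be absorbed into the density by H\"older and $E_{1/m}(\phi)\subset E_{1/q}(\phi)$; no borderline Skoda analysis for $e^{-a\psi_m}$ is needed.
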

 
Note that Theorem \ref{thm1} in the introduction is a particular case of Theorem~\ref{mainthm}. 
We first  establish this result under an extra assumption. More precisely,  
we have the following theorem, which is closely similar to \cite[Theorem 3.1]{di2017complex} in the case that $\theta$ is K\"ahler.

\begin{theorem}\label{thm: mainthm}
 Let $\f\in\mathcal{E}(X,\theta)$ be normalized by $\sup_X\f=0$. Assume that $\MA_\theta(\f)\leq e^{-\phi}gdV$, for some quasi-psh function $\phi$ on $X$, and $0\leq g\in L^p(dV)$, with $p>1$. 
 Assume that $\phi$ is locally bounded on an open set $U\subset {\rm Amp}(\alpha)$. Then $\f $ is continuous on $U$.
\end{theorem}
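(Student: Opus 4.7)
The plan is to adapt Di Nezza–Lu's K\"ahler argument \cite{di2017complex} to the big class setting, replacing the trivial reference function $V_\omega\equiv 0$ (available in the K\"ahler case) by a $\theta$-psh perturbation of $V_\theta$ enjoying strict positivity along $\omega$. By bigness of $\alpha=\{\theta\}$, fix a quasi-psh function $\rho\leq 0$, smooth on ${\rm Amp}(\alpha)$, with $\theta+\dc\rho\geq 2\epsilon_0\omega$ for some $\epsilon_0>0$. For small $\eta\in(0,1)$, set $\psi_\eta:=(1-\eta)V_\theta+\eta\rho$. Then $\psi_\eta$ is $\theta$-psh with $\theta+\dc\psi_\eta\geq 2\eta\epsilon_0\omega$; since $\psi_\eta\leq 0$ and $V_\theta$ is maximal among $\theta$-psh functions $\leq 0$, we have $\psi_\eta\leq V_\theta$; and $\psi_\eta$ is locally bounded on ${\rm Amp}(\alpha)$, hence bounded on $\overline W$ for any $W\Subset U$.

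Combining Lemma~\ref{l27} applied to $\psi_\eta$ with Lemma~\ref{p28} gives, for every $s,t>0$,
\begin{align*}
s^n\Capa_\omega(\{\varphi<\psi_\eta-t-s\})\leq (2\eta\epsilon_0)^{-n}\int_{\{\varphi<\psi_\eta-t\}}\MA_\theta(\varphi),
\end{align*}
and the inclusion $\{\varphi<\psi_\eta-t\}\subset\{\varphi<V_\theta-t\}$ combined with $\varphi\in\mathcal{E}(X,\theta)$ already drives the right-hand side to $0$ as $t\to+\infty$. To upgrade this ``soft'' decay to a uniform lower bound on a fixed $W\Subset U$, I would run a Ko\l{}odziej-style iteration on $f(t):=\Capa_\omega(\{\varphi<\psi_\eta-t\})$. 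The local boundedness $\phi\geq -M$ on a neighborhood of $\overline W$, combined with Lemma~\ref{l1}, yields $\int_{E\cap W}\MA_\theta(\varphi)\leq e^M B\,\Capa_\omega(E)^4$ for every Borel $E\subset X$, while the complementary contribution $\MA_\theta(\varphi)(\{\varphi<\psi_\eta-t\}\setminus W)$ is controlled by $\int_{\{\varphi<V_\theta-t\}}\MA_\theta(\varphi)$. These combine into a self-improving inequality of the form $s^n f(t+s)\leq Af(t)^4+r(t)$ with $r(t)\searrow 0$; choosing a sufficiently large starting time $t_*$ and geometrically summable increments $s_k$, the iteration drives $f$ to zero in finitely many steps, giving $\varphi\geq \psi_\eta-T_0\geq -C_W-T_0$ on $W$.

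With $\varphi$ locally bounded on $U$, continuity is extracted by a standard approximation and stability argument. Truncate the density $h:=d\MA_\theta(\varphi)/dV\leq e^{-\phi}g$ as $h_j:=\min(h,j)\in L^\infty(X)$ and solve $\MA_\theta(\varphi_j)=c_j h_j\, dV$ with $\sup_X\varphi_j=0$ and $c_j=\Vol(\alpha)/\|h_j\|_{L^1}\to 1$. Since $h_j\in L^\infty\subset L^p$, the $L^p$-regularity theory for Monge-Amp\`ere equations in big classes makes each $\varphi_j$ continuous on ${\rm Amp}(\alpha)$. Re-running the capacity machinery of the previous step gives uniform convergence $\varphi_j\to\varphi$ on $W$, whence $\varphi\in C^0(W)$ as a uniform limit of continuous functions; since $W\Subset U$ is arbitrary, $\varphi\in C^0(U)$.

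The main obstacle is precisely the Ko\l{}odziej iteration: the $L^p$ self-improvement operates only on the good region $W$, whereas the drift coming from $X\setminus W$, although tending to $0$, must be absorbed without destroying the geometric decay rate. This is where the envelope $P_\omega(b\varphi-bV_\theta)$ of Proposition~\ref{prop_imp}—a full-mass $\omega$-psh function whose Monge-Amp\`ere mass is dominated on its contact set by Lemma~\ref{l2}—plays its role: it is the natural bridge translating the $\theta$-psh bound $\MA_\theta(\varphi)\leq e^{-\phi}g\,dV$ into an $\omega$-psh compatible estimate that closes the self-improving inequality.
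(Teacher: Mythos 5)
Your Step 1 contains the essential gap. You keep the reference function $\psi_\eta=(1-\eta)V_\theta+\eta\rho$ free of the singular weight $\phi$ and try to handle $e^{-\phi}$ by splitting the mass of $\MA_\theta(\f)$ into the part over $W$ (where $\phi\geq -M$) and a remainder $r(t)\to 0$ coming from $X\setminus W$. But the resulting inequality $s^nf(t+s)\leq Af(t)^4+r(t)$, with $r(t)$ decreasing to $0$ at no controlled rate, cannot be iterated to give $f(T_0)=0$: the De Giorgi--Ko\l{}odziej lemma \cite[Lemma 2.4]{eyssidieux2009singular} needs the pure self-improving form $sf(t+s)\leq Cf(t)^{1+\alpha}$. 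And the conclusion you aim for is in fact false in general: $f(T_0)=0$ would mean $\{\f<\psi_\eta-T_0\}$ is pluripolar, hence empty, i.e.\ $\f\geq\psi_\eta-T_0$ on all of $X$; since $\psi_\eta$ is locally bounded on the whole of ${\rm Amp}(\alpha)$, this contradicts, e.g., the log canonical K\"ahler--Einstein case where $\f\to-\infty$ along the non-klt locus inside the ample locus. Smallness of the \emph{global} capacity of the sublevel sets also gives you no way to conclude that they eventually avoid $W$, so the restriction ``on $W$'' in your last line is not obtained; the localization you need simply does not come out of this scheme, and a purely local Ko\l{}odziej argument on $W$ cannot replace it either, since a psh function with $L^p$ Monge--Amp\`ere density on a ball need not be bounded without boundary control.

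The paper's proof closes exactly this gap by folding the singular weight into the reference function: it sets $\psi=\rho_0+a\phi$ with $a$ so small that $a\phi$ is $\delta_0\omega$-psh, so that on $\{\f<\psi-t\}$ one has $e^{b(\psi-\f)}e^{-\phi}\leq e^{-b(\f-V_\theta)}\leq e^{-P_\omega(b\f-bV_\theta)}$ with $b=a^{-1}$, and the envelope $P_\omega(b\f-bV_\theta)$ (Proposition~\ref{prop_imp}: full mass, hence zero Lelong numbers, hence Skoda integrability) supplies a finite constant $\int_Xe^{-2P_\omega(b\f-bV_\theta)}g\,dV$. This produces the clean inequality $s^nH(t+s)\leq C^{1/n}H(t)^2$ for the $\psi$-relative capacity with no error term, and the iteration gives the global bound $\f\geq\rho_0+a\phi-A$, which degenerates precisely where $\phi$ does and is exactly a local bound on $U$. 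Note that your closing remark about $P_\omega(b\f-bV_\theta)$ being ``the bridge'' only makes sense in this setup; with your choice $\psi_\eta$ the envelope has no estimate to enter. Your continuity step would then also need revisiting: approximating by solutions with truncated densities requires a locally uniform stability estimate that runs into the same localization problem, whereas the paper approximates $\f$ from above by the envelopes $P_\theta(h_j)$ of smooth $h_j\searrow\f$ (continuous on ${\rm Amp}(\alpha)$ by \cite[Theorem D]{demailly2014holder}) and reruns the capacity argument with reference $\psi_j=\lambda\psi+(1-\lambda)\f_j-(A+2)\lambda$ to get locally uniform convergence on $U$.
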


\begin{proof}[Proof of Theorem~\ref{thm: mainthm}]	
We fix a $\theta$-psh function $\rho_0$ on $X$  such that  \[\theta+\dc\rho_0\geq 2\delta_0\omega,\]	for some small constant $\delta_0>0$. Replacing $\rho_0$ by $\rho_0-\sup_X\rho_0$, we can always assume that $\rho_0\leq V_\theta$. Moreover, we can choose $\rho_0$ such that it is smooth in the ample locus, with analytic singularities thanks to \cite[Theorem 3.17 (ii)]{boucksom2004divisorial}. 

We will divide the proof in three steps. 
\smallskip

\noindent\textbf{Step 1.} {\it We prove that $\f$ is locally bounded on $U$.} 

We pick $a>0$ so small that $a\phi$ belongs to $\PSH(X,\delta_0\omega)$.
Set $\psi:=\rho_0+a\phi$. We thus have
$\theta+\dc \psi\geq \delta_0\omega$, and $\psi\leq V_\theta+a\phi$.
We claim that
 \begin{equation}\label{eq: lower}
     \f\geq \psi-A,
 \end{equation}
 for  $A>0$ depending only $\delta_0$, $p$, $dV$, $\|g\|_{L^p(dV)}$, and  $\int_Xe^{-2P_\omega(a^{-1}\f-a^{-1}V_\theta)}gdV$. 
\smallskip 
 
We remark that
by Proposition \ref{prop_imp}, for any $b>0$, $P_\omega(b\f-bV_\theta)$ is a $\omega$-psh function with full Monge-Amp\`ere mass,  hence it has  zero Lelong numbers (see \cite[Corollary 1.8]{guedj2007weighted}). Therefore,
  Skoda's integrability theorem \cite{skoda1972sous} ensures that $e^{-P_\omega(b\f-bV_\theta)}$ belongs to $L^q(dV)$ for all $q<+\infty$. In particular, $\int_Xe^{-2P_\omega(b\f-bV_\theta)}gdV$ is finite for any $b>0$.

 The proof of the claim above follows the approach of Di Nezza and Lu \cite{di2017complex,di2015generalized}.
To see this, fix $s\in[0,1]$, $t>0$. Set $d\nu=gdV$, $b=a^{-1}$.
	Using Lemma~\ref{p28} and the assumption on $\MA_\theta(\f)$ we have 
\begin{equation}\label{eq1}
    	\begin{aligned}[c]
	s^n\Capa_{\theta,\psi}(\{\f<\psi-t-s\})&\leq\int_{\{\f<\psi-t\}}\MA_{\theta}(\f)\\
	&\leq \int_{\{\f<\psi-t\}}e^{b(\psi-\f)}e^{-\phi}d\nu \\
	&\leq\int_{\{\f<\psi-t\}} e^{-(b\f-bV_\theta)}d\nu\\
	&\leq \int_{\{\f<\psi-t\}}e^{-P_\omega(b\f-bV_\theta)}  d\nu,
	\end{aligned}
	\end{equation} where  we have used that $b\psi\leq b V_\theta+\phi$ in the third inequality.
Using H\"older's inequality we have
	\begin{align}\label{ine1}
	\int_{\{\f<\psi-t\}}e^{-P_\omega(b\f-b V_\theta)}d\nu\leq \left(\nu({\{\f<\psi-t\}})\right)^{1/2}\left(\int_X e^{-2P_\omega(b\f-bV_\theta)}  d\nu\right)^{1/2}.
	\end{align}
	By Lemma~\ref{l1}, one can find a constant $B>0$ depending on $n$, $p$, $\omega$, $dV$, and $\|g\|_{L^p(dV)}$ such that
	\begin{align*}
	\nu(\cdot)^{1/2}\leq B(\Capa_{\omega}(\cdot))^2.
	\end{align*}
 Since $\theta+\dc\psi\geq\delta_0\omega$ it follows from Lemma~\ref{l27} that $\Capa_\omega\leq \delta_0^{-n}\Capa_{\theta,\psi}$, hence
	\begin{align}\label{ine2}
	\nu(\cdot)^{1/2}\leq B\delta_0^{-2n}\Capa_{\theta,\psi}(\cdot)^2.
	\end{align}
 By \eqref{eq1}, \eqref{ine1} and \eqref{ine2}  we thus get
	\begin{equation}\label{est: cc}
	s^n\Capa_{\theta,\psi}(\{\f<\psi-s-t \})\leq C \Capa_{\theta,\psi}(\{\f<\psi-t \})^2,
	\end{equation} 
	where $C$ depends on $\omega$, $\delta_0$, $n$, $p$, $\|g\|_{L^p(dV)}$, and  $\int_X e^{-2P_\omega(b\f-bV_\theta)}  d\nu$. Set
	\begin{align*}
	H(t):=\left[\Capa_{\theta,\psi}(\{\f<\psi-t \})\right]^{1/n},\quad t>0.
	\end{align*}
	By the estimate \eqref{est: cc} we get
	\begin{align*}
	sH(t+s)\leq C^{1/n}H(t)^2.
	\end{align*} It follows from Lemma~\ref{lem: rightcontinuous} that the function $H$
is right-continuous
 and $H(+\infty)=0$.
We can thus apply \cite[Lemma 2.4]{eyssidieux2009singular} which yields $H(t_0+2)=0$, where  $t_0>0$ is such  that 
\begin{align*}
H(t_0)<\dfrac{1}{2C^{1/n}}.
\end{align*} 
Therefore, for $A=t_0+2$ we have $\f\geq \psi -A$ on $X\backslash P$ for some Borel subset $P$ such that $\Capa_{\theta,\psi}(P)=0$. By Lemma~\ref{l27} we have $\Capa_\omega(P)=0$ so $P$ is a pluripolar set. Hence $\f\geq \psi-A$ everywhere. 

 Using H\"older's inequality it follows from \eqref{eq1} (take $s=1$) that
	 \begin{align*}
	 H(t)^n&\leq\left(\int_Xe^{-2P_\omega(b\f-bV_\theta))}d\nu\right)^{1/2}\left(\int_{\{\f<\psi-t+1\}}d\nu\right)^{1/2}\\
	 &\leq\left(\int_Xe^{-2P_\omega(b\f-bV_\theta)}gdV\right)^{1/2}\left(\dfrac{1}{t-1}\int_{X}|\psi-\f|g dV\right)^{1/2}.
	 \end{align*}
	The last integral is bounded by a uniform constant: using H\"older's inequality again we have $\int_X|\psi-\f|gdV\leq \|g\|_{L^p(dV)}\left(\|\psi\|_{L^{q}(dV)}+\|\f\|_{L^q(dV)}\right)$ with $q=p/(p-1)$.
	Since $\f$  belongs to the compact set of $\theta$-psh functions normalized by $\sup_X \varphi=0$, its $L^{q}$ norm is bounded by  an absolute constant only depending on $\theta,dV$ and $p$. 
	Consequently, we can choose $t_0>0$ to be only dependent on $dV$, $p$, $\|g\|_{L^p(dV)}$,
	and an upper bound for $\int_X e^{-2P_\omega(b\f-bV_\theta)}  d\nu$.

\smallskip

\noindent\textbf{Step 2.} {\it There exists a sequence of functions $\f_j\in\PSH(X,\theta)\cap \mathcal{C}^0(\textrm{Amp}(\alpha))$ which decreases towards $\f$.}

\smallskip
	
	 For convenience, we normalize $\f$ so that $\sup_X\f=-1$. Let $0\geq h_j$ be  a sequence of smooth functions  decreasing to $\f$. Then the sequence of $\theta$-psh functions $\f_j:=P_\theta(h_j)$ decreases to $\f$ as $j\to +\infty$. 
	 Indeed, since the operator $P_\theta$ is monotone, the sequence $\f_j$ is decreasing to a $\theta$-psh function $u$ and since $\f_j\geq \f$ for all $j$ we have $u\geq \f$. Moreover,  $u(x)\leq\f_j(x)\leq h_j(x),\forall\; x\in X$, for all $j$, hence $u(x)\leq \f(x)$,
	 as claimed.
	 Furthermore, we have that $\f_j$ is  continuous  in $\textrm{Amp}(\alpha)$ for each $j$. In fact, \cite[Ineq. (1.2)]{berman2019monge}
	 gives an upper bound on the Monge-Amp\`ere measure of $\f_j$:
	 \begin{equation*}
	     \MA_\theta(\f_j)\leq \mathbf{1}_D\MA_\theta(h_j),\quad D=\{ \f_j=h_j\}.
	 \end{equation*}
	 The equality also holds following the work of Di Nezza and Trapani \cite{di2019monge}. In particular $\MA_\theta(\f_j)$ has an $L^\infty$-density, hence it follows from \cite[Theorem D]{demailly2014holder} that $\f_j$ is H\"older continuous in $\textrm{Amp}(\alpha)$, as claimed.
\smallskip

\noindent\textbf{Step 3.} {\it We show that the decreasing convergence $\f_j\to \f$ is locally uniform on $U$ and finish the proof.}
\smallskip 

	 Fix $\lambda\in (0,1)$. For any $j\in \mathbb{N}$, set
	\begin{align*}
	\psi_j:=\lambda\psi+(1-\lambda)\f_j-(A+2)\lambda,
	\end{align*} where $A>0$ is the uniform constant so that $\f\geq \psi-A$ (see~\eqref{eq: lower}).
	If we pick $\varepsilon_j\leq \frac{\lambda}{2(1-\lambda)}\delta_0$ for $j>0$ big enough, then  $\theta+\dc\psi_j\geq \frac{\lambda}{2}\delta_0 \omega$. We observe by definitions that $\f_j\leq V_\theta$, hence $\psi_j\leq \lambda\psi+(1-\lambda)V_\theta\leq V_\theta+\lambda a\phi$.
	Set
	$$H_j(t):=\left[\Capa_{\theta,\psi_j}(\{\f<\psi_j-t \})\right]^{1/n},\quad t>0.$$ 
	For any $s\in[0,1]$, $t>0$, we can argue as above to obtain
	\begin{align*}
	sH_j(t+s)\leq C^{1/n}H_j(t)^2,
	\end{align*}
  for $C>0$ only depending on $p$, $dV$, $\|g\|_{L^p(dV)}$, $\delta_0$, $\lambda$, and  $\int_Xe^{-2P_\omega (c\varphi-c V_\theta)}gdV$, with $c=(\lambda a)^{-1}$. Let $\chi$ be an increasing convex weight such that $\chi(0)=0$, $\chi(-\infty)=-\infty$, and $\f$ has finite $\chi$-energy (see \cite[Proposition 2.11]{boucksom2010monge}). Since $\f_j\geq \f\geq \psi-A$ we have $\psi_j\leq \f_j-2\lambda$. It follows from Lemma~\ref{p28} (take $s=\lambda$, $\psi=\psi_j+\lambda$)  that
	\begin{align*}
	\lambda^n\Capa_{\theta,\psi_j}\{\f<\psi_j\}&\leq\int_{\{\f<\psi_j+\lambda\}}\MA_{\theta}(\f)\leq\int_{\{\f<\f_j-\lambda  \}} \MA_{\theta}(\f)\\
	&\leq \dfrac{1}{-\chi(-\lambda)}\int_X(-\chi\circ(\f-\f_j))\MA_\theta(\f).
	\end{align*}
	The latter converges to $0$ as $j\rightarrow +\infty$ since $\f_j$ decreases to $\f$, namely $H_j(0)$ goes to $0$ as $j\to+\infty$. We thus take $j>0$ so big  that $H_j(0)\leq 1/(2C^{1/n})$. It thus follows from  \cite[Remark 2.5]{eyssidieux2009singular} that $H_j(t)=0$ if $t\geq t_0$ where $t_0\leq 4C^{1/n}H_j(0)$. We then have
	\begin{align}\label{ineq: thm11}
	\f\geq \lambda\psi+(1-\lambda)\f_j-(A+2)\lambda-4C^{1/n}H_j(0).
	\end{align} We have $H_j(0)\to 0$ as $j\to\infty$.
 Letting $j\rightarrow+\infty$, we thus obtain
	\begin{align}\label{eq: last}
	\lim_{j\rightarrow+\infty}\inf_K(\f-\f_j)\geq -\lambda(\sup_K|\psi|+A+2),
	\end{align} for any compact subset $K$ of $U$.  We observe by definition that $\psi=\rho_0+a\phi$ is bounded on $K$.
	Finally, letting $\lambda\rightarrow 0$ in~\eqref{eq: last} we obtain that the convergence $\f_j\to\f$ is locally uniform on $U$, hence $\f$ is continuous on $U$.
\end{proof}

\begin{proof}[Proof of Theorem \ref{mainthm}] 
By rescaling, we may  assume without loss of generality that $\phi$ is a $\omega$-psh function.
By Theorem~\ref{thm: dem}, we can find a quasi-psh function ${\phi}_m$ with analytic singularities such that $\omega+\dc{\phi}_m\geq -\varepsilon_m\omega$ for  $\varepsilon_m>0$ decreasing to $0$ as $m\to +\infty$, and $$\int_{X}e^{2m({\phi}_m-{\phi})}dV<+\infty.$$
Note that ${\phi}_m$ is smooth outside the analytic set $E_{1/m}(\phi)\subset X$. We see that
\begin{align*}
    \MA_\theta(\f)\leq e^{-\phi_m} e^{({\phi}_m-{\phi})} gdV.
\end{align*} Set now
	 $g'=e^{({\phi}_m-{\phi})}g$. We choose $m=[q]$, where $[q]$ denotes the integer part of $q$. We have
	  $2m>q$, hence there is a constant $p'>1$ such that $\frac{1}{p'}=\frac{1}{p}+\frac{1}{2m}$.  It follows from  H\"older's inequality that $g'\in L^{p'}(dV)$. 
	 Observe that $\phi_m$ is smooth in the complement $X\setminus E_{1/m}(\phi)$ of the analytic set $E_{1/m}(\phi)\subset X$, in particular it is locally bounded on $\textrm{Amp}(\alpha)\setminus E_{1/m}(\phi)\supset \textrm{Amp}(\alpha)\setminus E_{1/q}(\phi)$.  We can thus apply Theorem~\ref{thm: mainthm} to complete the proof. 
	 
	 In particular if $p=\infty$ then we can choose $m=1$ to conclude the proof of Theorem~\ref{thm1}.
\end{proof}	 

\subsection{K\"ahler-Einstein metrics on log canonical pairs of general type}\label{sect: kemetric}		

\subsubsection{Log canonical singularities}

A \emph{pair} $(Y,\Delta)$ is by definition a connected  complex normal projective variety $Y$ and an effective Weil $\mathbb{Q}$-divisor $\Delta$. We will say that the pair $(Y,\Delta)$ has \emph{log canonical singularities} if $K_Y+\Delta$ is $\mathbb{Q}$-Cartier, and if for some (or equivalently any) log resolution $\pi:X\to Y$ of $(Y,\Delta)$, we have
\begin{equation*}
    K_X=\pi^*(K_Y+\Delta)+\sum_ia_iE_i,
\end{equation*}
where $E_i$ are either exceptional divisors or components of the strict transform of $\Delta$, and the coefficients $a_i\in\mathbb{Q}$ satisfy the inequality $a_i\geq -1$ for all $j$. The divisor $\sum_i E_i$ has simple normal crossing support. We denote the singular set of $Y$ by $Y_{\rm sing}$ and let $Y_{\rm reg}:=Y\setminus Y_{\rm sing}$.

Let $m$ be a positive integer such that $m(K_Y+\Delta)$ is Cartier. If we choose $\sigma$ a local generator of $m(K_Y+\Delta)$ defined on an open subset $U$ of $Y$, then $
    (i^{mn^2}\sigma\wedge\Bar{\sigma})^{1/m}
$ defines a smooth volume form on $U\cap (Y_{\rm reg}\setminus \text{Supp}(\Delta))$. If $
f_i$ is a local equation of $E_i$ around a point $\pi^{-1}(U)$, then we can see that 
\begin{equation}\label{eq: adapted}
\pi^*\left( i^{mn^2}\sigma\wedge\Bar{\sigma} \right)^{1/m}=\prod_i|f_i|^{2a_i}dV
\end{equation} locally on $\pi^{-1}(U)$ for some local volume form $dV$.

The previous construction leads to the following \emph{adapted measure} which is introduced in \cite[Sect. 6]{eyssidieux2009singular}:
\begin{definition}
    Let $(Y,\Delta)$ be a pair and let $h$ be a smooth hermitian metric on the $\mathbb{Q}$-line bundle $\mathcal{O}_Y(K_Y+\Delta)$. The corresponding {\em adapted measure} $\mu_{Y,h}$ on $Y_{\rm reg}$ is locally defined by choosing a nowhere zero  section $\sigma$ of $\mathcal{O}_Y(m(K_Y+\Delta))$ over a small open set $U$ and setting
    \begin{equation*}
        \mu_{Y,h}:=\frac{(i^{mn^2}\sigma\wedge\Bar{\sigma})^{1/m}}{|\sigma|_{h^m}^{2/m}}.
    \end{equation*}
\end{definition}  The point of the definition is that the measure $\mu_{Y,h}$ does not depend on the choice of $\sigma$. This measure can be extended by zero across $Y_{\rm sing}\cup \textrm{Supp}(\Delta)$. 
Remark that the restriction $\sigma|_{Y_{\rm reg}}$ can be viewed as a meromorphic form with a pole of order $md_i$ on $\Delta_i$ where $\Delta=\sum_i d_i\Delta_i$ is the decomposition of $\Delta$ into prime divisors (see \cite[Sect. 6.3]{eyssidieux2009singular}). The Lelong-Poincar\'e formula yields \begin{equation*}
-\dc\log\mu_{Y,h}=[\Delta]-i\Theta_h(K_Y+\Delta)
\end{equation*} on $Y_{reg}$, where $[\Delta]$ is the integration current on $\Delta$.

\subsubsection{K\"ahler-Einstein metrics} 
Let $(Y,\Delta)$ be a log canonical pair, with $n=\dim_\mathbb{C}Y$. Assume that $K_Y+\Delta$ is a big $\mathbb{Q}$-line bundle. 

 We next recall the notion of (negatively curved) K\"ahler-Einstein metric attached to a pair $(Y,\Delta)$. There are various equivalent definitions for such an object (e.g. in \cite{eyssidieux2009singular,boucksom2010monge,berman2014kahler}), we choose here the following definition in the sense of \cite[Sect. 3]{berman2014kahler}.

\begin{definition} We say that a closed positive current $\omega_{KE}\in c_1(K_Y+\Delta)$ on $Y$ is 
a \emph{(singular) K\"ahler-Einstein} metric (\emph{KE} for short) with negative curvature for  $(Y,\Delta)$ if
\begin{enumerate}
	\item The non-pluripolar product $\omega_{KE}^n$ defines a (locally) absolutely continuous measure on $Y_{\rm reg}$ with respect to $dz\wedge d\bar{z}$ and $\log(\omega_{KE}^n/dz\wedge d\bar{z})\in L^1_{\rm loc}(Y_{\rm reg})$, 
	where $z=(z_i)$ are local holomorphic coordinates;
	\item ${\rm Ric}(\omega_{KE})=-\omega_{KE}+[\Delta]$ on $Y_{\rm reg}$;
	\item $\int_{Y_{\rm reg}} \omega^n_{KE}={\rm vol}(K_Y+\Delta)$.
\end{enumerate} 
\end{definition}
The condition (1) allows us to define (on $Y_{\rm reg}$) the Ricci curvature of $\omega_{KE}$ by setting $\textrm{Ric}(\omega_{KE}):=-\log(\omega_{KE}^n)$. 
Another way of thinking of this is to interpret the positive measure $\omega_{KE}^n|_{Y_{\rm reg}}$  as a singular metric on $-K_{Y_{\rm reg}}$ whose curvature is then $\textrm{Ric}( \omega_{KE})$ by definition.

Let $h$ be a smooth hermitian metric on $K_Y+\Delta$ with curvature $\eta$.
 Finding a singular 
K\"ahler-Einstein metric is equivalent to solving the following Monge-Amp\`ere equation for an $\eta$-psh $\phi$ with full Monge-Amp\`ere mass
\begin{equation}\label{eq: cmae}
(\eta+\dc\phi)^n=e^{\phi+c} \mu_{Y,h
},
\end{equation} for some $c\in\mathbb{R}$. Indeed, we set $\omega:=\eta+\dc\phi$. Since $\phi$ is locally integrable $\omega$ satisfies the condition (1) (see Eq.~\eqref{eq: adapted}).
 We also have
\begin{equation*}
    {\rm Ric}(\omega)=-\dc\log\omega^n=-\dc\phi-\dc\log\mu_{Y,h}=-\dc\phi-\eta+[\Delta]
\end{equation*} on $Y_{\rm reg}$. Condition (3) is clearly satisfied.

\smallskip

We now prove Corollary~\ref{coro} in the introduction. Assume the initial  pair $(Y,\Delta)$ is lc of general type, i.e. the canonical bundle $K_Y+\Delta$ is big.  We consider a log resolution $\pi:(X,D)\rightarrow(Y,\Delta)$ of the pair, with $K_X+D=\pi^*(K_Y+\Delta)$. Here, $D=\sum a_i D_i$ is a $\mathbb{R}$-divisor with simple normal crossing support (snc for short) on $X$, consisting of $\pi$-exceptional divisors with coefficients in $(-\infty,1]$, and of the strict transforms of the components of $\Delta$ with  coefficients in $(0,1]$. The (singular) K\"ahler-Einstein metric  $\omega_{KE}$ for $(X,D)$, or equivalently the pull-back of the (singular) KE metric for $(Y,\Delta)$ by $\pi$ 
can be written as $\omega_{KE}=\theta+\dc \f$ where $\theta=\pi^*\eta$
is a smooth representative of $c_1(K_X+D)$ and $\f$ is a $\theta$-psh function (with full Monge-Amp\`ere mass)
solving the following Monge–Amp\`ere equation
\begin{equation}\label{eq: ofbg}
    \MA_\theta(\f)=\frac{e^\f dV}{\prod_i|s_i|^{2a_i}}.
\end{equation} Here $s_i$ are sections cutting out  $D_i$ above, $|\cdot|_i$ are smooth hermitian metrics on associated line bundles
$\mathcal{O}_X(D_i)$, and $dV$ is the smooth
volume form with prescribed Ricci curvature. We let $D_{\rm nklt}$ denote the non-klt part of $D$, i.e. $D_{\rm nklt}:=\cup_{a_i=1}D_i$. We remark that $\f$ goes to $-\infty$ near $D_{\rm nklt}$ as $|s|^{-2}$ is not integrable.

As a consequence of Theorem \ref{thm: mainthm} we have the following

\begin{corollary}\label{cor1}
There is a unique solution $\f\in\mathcal{E}(X,\theta)$ to the equation~\eqref{eq: ofbg} which is continuous on $\textrm{Amp}(\theta)\setminus D_{\rm nklt}$.
\end{corollary}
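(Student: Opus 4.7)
The plan is to deduce Corollary~\ref{cor1} from a direct application of Theorem~\ref{mainthm}. Existence and uniqueness of the solution $\f\in\mathcal{E}(X,\theta)$ to \eqref{eq: ofbg} are already supplied by Berman-Guenancia \cite{berman2014kahler}, so the remaining task is to establish continuity on ${\rm Amp}(\theta)\setminus D_{\rm nklt}$.

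The strategy is to rewrite the right-hand side of \eqref{eq: ofbg} as $e^{-\phi}g\,dV$, with $\phi$ quasi-psh and $g\in L^p(dV)$ for some $p>1$, in such a way that the Lelong super-level set $E_{1/q}(\phi)$ coincides with $D_{\rm nklt}$. Concretely, I would split the snc divisor as $D=D_{\rm klt}+D_{\rm nklt}$, with $D_{\rm nklt}=\sum_{a_i=1}D_i$ and the remaining coefficients $a_i<1$, and then set
\[
\phi:=\sum_{a_i=1}\log|s_i|^2,\qquad g:=\frac{e^\f}{\prod_{a_i<1}|s_i|^{2a_i}}.
\]
The function $\phi$ is a finite sum of the quasi-psh functions $\log|s_i|^2$, hence quasi-psh, and by construction $\MA_\theta(\f)=e^{-\phi}g\,dV$.

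Two short checks will then conclude the proof. First, $g\in L^p(dV)$ for some $p>1$: the factor $e^\f$ is uniformly bounded above because any $\theta$-psh function on the compact manifold $X$ satisfies $\sup_X\f<\infty$; and the snc assumption together with $a_i<1$ on the klt part ensures $\prod_{a_i<1}|s_i|^{-2a_i}\in L^p(dV)$ for any $p>1$ sufficiently close to $1$ (the indices with $a_i\leq 0$ contribute smooth factors). Second, since $D_{\rm nklt}$ is snc, the Poincar\'e-Lelong formula gives $\nu(\phi,x)=\#\{i:a_i=1,\ x\in D_i\}\in\mathbb{Z}_{\geq 0}$, so for every $q>1$ one has $E_{1/q}(\phi)=\{x:\nu(\phi,x)\geq 1\}=D_{\rm nklt}$. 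Theorem~\ref{mainthm} then yields the desired continuity on ${\rm Amp}(\theta)\setminus D_{\rm nklt}$.

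The only genuinely non-mechanical point is the observation in the previous paragraph that the snc hypothesis forces $\nu(\phi,\cdot)$ to be integer-valued. Without this, the threshold $1/q<1$ appearing in Theorem~\ref{mainthm} could in principle cut out a strictly larger analytic set than $D_{\rm nklt}$, and one would obtain continuity only on a smaller Zariski open subset; with it, no loss occurs. Everything else is a direct substitution of data into the machinery already developed.
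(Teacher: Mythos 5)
Your proposal is correct, and the decomposition you use is exactly the paper's ($\phi=\sum_{a_i=1}2\log|s_i|$ for the non-klt part, $g$ the klt density times the bounded factor $e^{\f}$, with $e^{\f}$ bounded since $\f$ is $\theta$-psh, and $g\in L^{p}$ for $p>1$ close to $1$ by the snc hypothesis); the difference is the final step. The paper applies Theorem~\ref{thm: mainthm} directly, taking $U={\rm Amp}(\theta)\setminus D_{\rm nklt}$ and simply observing that $\phi$ is smooth, hence locally bounded, on $U$. You instead invoke Theorem~\ref{mainthm} and then identify $E_{1/q}(\phi)$ with $D_{\rm nklt}$. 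This works, but it is a detour: Theorem~\ref{mainthm} is itself proved by running Demailly's equisingular approximation to reduce to Theorem~\ref{thm: mainthm}, and that step is superfluous here because $\phi$ already has analytic singularities and is locally bounded off $D_{\rm nklt}$. Two small remarks on your route. First, the observation you single out as the crucial one is not actually needed: to conclude continuity on ${\rm Amp}(\theta)\setminus D_{\rm nklt}$ you only need the inclusion $E_{1/q}(\phi)\subseteq D_{\rm nklt}$, and this holds for trivial reasons since $\phi$ is locally bounded outside $D_{\rm nklt}$ and therefore has vanishing Lelong numbers there, snc or not; the threshold $1/q<1$ can never cut out points where $\phi$ is bounded. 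Second, your Lelong number formula is off by a factor of $2$: with the paper's normalization $\nu(\log|f|,x)={\rm ord}_x(f)$, so $\nu(\phi,x)=2\,\#\{i:a_i=1,\ x\in D_i\}$ at generic points (and more along intersections), not $\#\{i:a_i=1,\ x\in D_i\}$; this does not affect the conclusion, since the only thing that matters is that these numbers are $\geq 1/q$ on $D_{\rm nklt}$ and $0$ off it. With these caveats, your argument is a valid, if slightly heavier, alternative to the paper's one-line application of Theorem~\ref{thm: mainthm}.
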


\begin{proof} The existence  of a unique solution $\f\in\mathcal{E}(X,\theta)$ to the Monge-Amp\`ere equation \eqref{eq: ofbg} follows from \cite[Theorem 4.2]{berman2014kahler}. It remains to prove the continuity of $\f$. 
 It is convenient to 
 differentiate the "klt part" of $D$ from its "non-klt part",  
 so we set $g=\prod_{a_i<1}|s_i|^{-2a_i}\in L^p$ for some $p>1$, and $\phi=\sum_{a_i=1}2\log|s_i|$. We see that $\phi$ is smooth outside of the non-klt locus $D_{\rm nklt}$, in particular it is locally bounded on $\textrm{Amp}(\theta)\setminus D_{\rm nklt}$.
 Since $\f$ is bounded from above, we can therefore apply Theorem~\ref{thm: mainthm} to complete the proof.
\end{proof}

With the notations above, since $\{\theta\}$ is a pull-back by $\pi$ of a big class,
 we have
 $$\textrm{Amp}(\theta)=\pi^{-1}(\textrm{Amp}(K_Y+\Delta))\setminus \textrm{Exc}(\pi)$$ which projects onto $\textrm{Amp}(K_Y+\Delta)\cap (Y,\Delta)_{\rm reg}$.
We next  observe that the projection of  the non-klt locus $D_{\rm nklt}$ is contained in $(Y,\Delta)_{\rm sing}\cup \lfloor\Delta \rfloor$, with $\lfloor\Delta \rfloor=\sum_{d_i=1}\Delta_i$. The proof of Corollary~\ref{coro} thus follows.

	\bibliographystyle{plain}
	\bibliography{bibfile}	

\begin{thebibliography}{10}

\bibitem{bedford1976dirichlet}
Eric Bedford and B.~A. Taylor.
\newblock The {D}irichlet problem for a complex {M}onge-{A}mp\`ere equation.
\newblock {\em Invent. Math.}, 37(1):1--44, 1976.

\bibitem{bedford1982new}
Eric Bedford and B.~A. Taylor.
\newblock A new capacity for plurisubharmonic functions.
\newblock {\em Acta Math.}, 149(1-2):1--40, 1982.

\bibitem{bedford1987fine}
Eric Bedford and B.~A. Taylor.
\newblock Fine topology, \v{S}ilov boundary, and {$(dd^c)^n$}.
\newblock {\em J. Funct. Anal.}, 72(2):225--251, 1987.

\bibitem{berman2019monge}
Robert~J. Berman.
\newblock From {M}onge-{A}mp\`ere equations to envelopes and geodesic rays in
  the zero temperature limit.
\newblock {\em Math. Z.}, 291(1-2):365--394, 2019.

\bibitem{berman2013variational}
Robert~J. Berman, S\'{e}bastien Boucksom, Vincent Guedj, and Ahmed Zeriahi.
\newblock A variational approach to complex {M}onge-{A}mp\`ere equations.
\newblock {\em Publ. Math. Inst. Hautes \'{E}tudes Sci.}, 117:179--245, 2013.

\bibitem{berman2014kahler}
Robert~J. Berman and Henri Guenancia.
\newblock K\"{a}hler-{E}instein metrics on stable varieties and log canonical
  pairs.
\newblock {\em Geom. Funct. Anal.}, 24(6):1683--1730, 2014.

\bibitem{boucksom2004divisorial}
S\'{e}bastien Boucksom.
\newblock Divisorial {Z}ariski decompositions on compact complex manifolds.
\newblock {\em Ann. Sci. \'{E}cole Norm. Sup. (4)}, 37(1):45--76, 2004.

\bibitem{boucksom2010monge}
S\'{e}bastien Boucksom, Philippe Eyssidieux, Vincent Guedj, and Ahmed Zeriahi.
\newblock Monge-{A}mp\`ere equations in big cohomology classes.
\newblock {\em Acta Math.}, 205(2):199--262, 2010.

\bibitem{darvas2018monotonicity}
Tam\'{a}s Darvas, Eleonora Di~Nezza, and Chinh~H. Lu.
\newblock Monotonicity of nonpluripolar products and complex {M}onge-{A}mp\`ere
  equations with prescribed singularity.
\newblock {\em Anal. PDE}, 11(8):2049--2087, 2018.

\bibitem{darvas2018singularity}
Tam\'{a}s Darvas, Eleonora Di~Nezza, and Chinh~H. Lu.
\newblock On the singularity type of full mass currents in big cohomology
  classes.
\newblock {\em Compos. Math.}, 154(2):380--409, 2018.

\bibitem{darvas2020metric}
Tam\'{a}s Darvas, Eleonora Di~Nezza, and Chinh~H. Lu.
\newblock The metric geometry of singularity types.
\newblock {\em J. Reine Angew. Math.}, 771:137--170, 2021.

\bibitem{demailly1992regularization}
Jean-Pierre Demailly.
\newblock Regularization of closed positive currents and intersection theory.
\newblock {\em J. Algebraic Geom.}, 1(3):361--409, 1992.

\bibitem{demailly2015cohomology}
Jean-Pierre Demailly.
\newblock {On the cohomology of pseudoeffective line bundles}.
\newblock {\em \href{https://arxiv.org/abs/1401.5432}{arXiv:1401.5432}}, 2015.

\bibitem{demailly2014holder}
Jean-Pierre Demailly, S{\l}awomir Dinew, Vincent Guedj, Hoang~Hiep Pham,
  S{\l}awomir Ko{\l}odziej, and Ahmed Zeriahi.
\newblock H\"{o}lder continuous solutions to {M}onge-{A}mp\`ere equations.
\newblock {\em J. Eur. Math. Soc. (JEMS)}, 16(4):619--647, 2014.

\bibitem{di2015generalized}
Eleonora Di~Nezza and Chinh~H. Lu.
\newblock Generalized {M}onge-{A}mp\`ere capacities.
\newblock {\em Int. Math. Res. Not. IMRN}, (16):7287--7322, 2015.

\bibitem{di2017complex}
Eleonora Di~Nezza and Chinh~H. Lu.
\newblock Complex {M}onge-{A}mp\`ere equations on quasi-projective varieties.
\newblock {\em J. Reine Angew. Math.}, 727:145--167, 2017.

\bibitem{di2019monge}
Eleonora Di~Nezza and Stefano Trapani.
\newblock {Monge-Amp{\`e}re measures on contact sets}.
\newblock {\em to appear in Math. Research Letters,
  \href{https://arxiv.org/abs/1912.12720}{arXiv:1912.12720}}, pages 1--11,
  2019.

\bibitem{dinew2016open}
S.~Dinew, V.~Guedj, and A.~Zeriahi.
\newblock Open problems in pluripotential theory.
\newblock {\em Complex Var. Elliptic Equ.}, 61(7):902--930, 2016.

\bibitem{eyssidieux2009singular}
Philippe Eyssidieux, Vincent Guedj, and Ahmed Zeriahi.
\newblock Singular {K}\"{a}hler-{E}instein metrics.
\newblock {\em J. Amer. Math. Soc.}, 22(3):607--639, 2009.

\bibitem{guedj2007weighted}
Vincent Guedj and Ahmed Zeriahi.
\newblock The weighted {M}onge-{A}mp\`ere energy of quasiplurisubharmonic
  functions.
\newblock {\em J. Funct. Anal.}, 250(2):442--482, 2007.

\bibitem{guedj2017degenerate}
Vincent Guedj and Ahmed Zeriahi.
\newblock {\em Degenerate complex {M}onge-{A}mp\`ere equations}, volume~26 of
  {\em EMS Tracts in Mathematics}.
\newblock European Mathematical Society (EMS), Z\"{u}rich, 2017.

\bibitem{kolodziej1998complex}
S{\l}awomir Ko{\l}odziej.
\newblock The complex {M}onge-{A}mp\`ere equation.
\newblock {\em Acta Math.}, 180(1):69--117, 1998.

\bibitem{lu2019complex}
Chinh~H. Lu and Van-Dong Nguyen.
\newblock {Complex Hessian equations with prescribed singularity on compact K\"
  ahler manifolds}.
\newblock {\em to appear in Annali della Scuola Normale Superiore di Pisa,
  \href{https://arxiv.org/abs/1909.02469}{arXiv:1909.02469}}, pages 1--35,
  2019.

\bibitem{siu1974analyticity}
Yum~Tong Siu.
\newblock Analyticity of sets associated to {L}elong numbers and the extension
  of closed positive currents.
\newblock {\em Invent. Math.}, 27:53--156, 1974.

\bibitem{siu1987lectures}
Yum~Tong Siu.
\newblock {\em Lectures on {H}ermitian-{E}instein metrics for stable bundles
  and {K}\"{a}hler-{E}instein metrics}, volume~8 of {\em DMV Seminar}.
\newblock Birkh\"{a}user Verlag, Basel, 1987.

\bibitem{skoda1972sous}
Henri Skoda.
\newblock Sous-ensembles analytiques d'ordre fini ou infini dans {${\bf
  C}^{n}$}.
\newblock {\em Bull. Soc. Math. France}, 100:353--408, 1972.

\bibitem{witt2019monotonicity}
David Witt~Nystr\"{o}m.
\newblock Monotonicity of non-pluripolar {M}onge-{A}mp\`ere masses.
\newblock {\em Indiana Univ. Math. J.}, 68(2):579--591, 2019.

\bibitem{yau1978ricci}
Shing~Tung Yau.
\newblock On the {R}icci curvature of a compact {K}\"{a}hler manifold and the
  complex {M}onge-{A}mp\`ere equation. {I}.
\newblock {\em Comm. Pure Appl. Math.}, 31(3):339--411, 1978.

\end{thebibliography}
	
\end{document}